\def\Int{\operatorname{Int}}
\def\bord{\partial}
\def\gw{\operatorname{gw}}
\let\bydef\emph
\def\sbo{sub\-or\-bi\-fold}
\newtheorem{theo}{Theorem}[section]
\newtheorem{prop}[theo]{Proposition}
\newtheorem{lem}[theo]{Lemma}
\newtheorem{slem}[theo]{Sublemma}
\theoremstyle{definition}
\newtheorem*{defi}{Definition}
\newtheorem{rem}[theo]{Remark}
\newenvironment{rems}{\bigskip\emph{Remarks.}\begin{itemize}}{\end{itemize}}
\def\rr{\mathbf{R}}
\def\zz{\mathbf{Z}}
\def\nn{\mathbf{N}}
\def\calT{\mathcal{T}}
\def\calC{\mathcal{C}}
\def\oo{\mathcal{O}}
\def\so{\Sigma_{\oo_0}}
\title{One-ended $3$-manifolds without locally finite toric decompositions}
\author{Sylvain Maillot}
\begin{document}

\maketitle

\begin{abstract}
We introduce a class of one-ended open 3-manifolds which can be `recursively' defined from two compact 3-manifolds, and construct examples of manifolds in this class which fail to have a toric decomposition in the sense of Jaco-Shalen and Johannson.
\end{abstract}

\section{Introduction}

To start off, we introduce a class $\calC$ of open 3-manifolds which we view as a candidate for `the smallest class of open 3-manifolds for which the classification problem is interesting.' It is large enough so that exotic phenomena due to the topology at infinity can occur; yet it is small enough so that algorithmic problems---in particular the homeomorphism problem---make sense, and seem to have a decent chance of being decidable.

Throughout the paper we work in the PL category. Let $(X,Y,F_+,F_-,f,g)$ be a 6-tuple with the following properties:
\begin{itemize}
\item Both $X$ and $Y$ are connected, orientable, compact 3-manifolds.
\item $\bord X$ is connected.
\item $\bord Y$ has exactly two components, which are $F_+$ and $F_-$.
\item $f$ is an orientation-reversing homeomorphism from $\bord X$ to $F_-$.
\item $g$ is an orientation-preserving homeomorphism from $F_+$ to $F_-$.
\end{itemize}

To such a 6-tuple, we associate an open 3-manifold obtained by gluing together $X$ and an infinite sequence of copies of $Y$ using the homeomorphisms $f,g$. More precisely, we set
$$M(X,Y,F_+,F_-,f,g) = X \cup_{f_0} Y \times \{0\} \cup_{g_0} T \times \{1\} _{g_1} Y \times \{2\} \cup_{g_2} \cdots$$
where the gluing homeomorphisms are given by $f_0(x)=(f(x),0)$ for all $x\in X$, and $g_n(x,n)=(g(x),n+1)$ for all $x\in Y$ and $n\in\nn$. We denote by $\calC$ the class of 3-manifolds obtained in this way.

\begin{rems}
\item Every 3-manifold in $\calC$ is orientable, connected, and one-ended. Moreover, it has an exhaustion by compact submanifolds with connected boundary of fixed genus. We can call \bydef{genus} of a manifold $M\in\calC$ the minimal genus of $\bord X$ in a presentation of $M$ as $M(X,Y,F_+,F_-,g,f)$.
\item We can fix triangulations of $X$ and $Y$ with respect to which $f$ and $g$ are simplicial, so that algorithmic problems are well-defined. For the same reason, there are countably many manifolds in $\calC$ up to homeomorphism.
\item The class $\calC$ contains the original Whitehead manifold~\cite{whitehead:unity} as well as many other contractible 3-manifolds not homeomorphic to $\rr^3$. It does not contain those with infinite genus. Nor does it contain all Whitehead manifolds of genus~1, since there are  uncountably many of those, as shown by McMillan~\cite{mcmillan}.
\item One can define in a similar way a class $\calC_n$ for each dimension $n$, so that $\calC=\calC_3$. This construction is a special case of that of~\cite{maillot:homeosurf} of manifolds associated to a topological automaton. The automaton has two states corresponding to $X,Y$. Thus the classification for $\calC_2$ follows from the main theorem of~\cite{maillot:homeosurf}. The one-ended case, however, is much simpler than that; it is a straightforward consequence of the Kerekjarto classification theorem. Indeed, an orientable, one-ended surface is classified by its genus $h\in [0,+\infty]$, which is easily seen to be $+\infty$ if $Y$ has positive genus, and equal to the genus of $X$ otherwise.
\end{rems}

In this paper, we are interested in the question of which, among the exotic phenomena concerning open 3-manifolds, occur in the class $\calC$. We already mentioned that $\calC$ contains contractible manifolds which are not homeomorphic to $\rr^3$. For instance, the original Whitehead manifold has genus 1, the manifold $X$ being a solid torus, and $Y$ being the exterior of the Whitehead link in $S^3$.

Likewise, the manifold $M_1$ constructed by the author in~\cite{maillot:examples} is easily seen to belong to $\calC$. It has genus $1$, the manifold $X$ being again a solid torus, and $Y$ being the product of a circle by a compact orientable surface of genus $1$ with two boundary components. This manifold has the property that it is impossible to split it as a connected sum of prime manifolds, even allowing infinitely many factors, and allowing the factors to be noncompact. Note that the first such example was constructed by P.~Scott~\cite{scott:noncompact}. Scott's example has infinitely many ends (in fact, its space of ends is a Cantor set due to the treelike nature of the construction.) Thus it does not belong to $\calC$.

The manifold $M_3$ from~\cite{maillot:examples} does not have a locally finite splitting along 2-tori into submanifolds that are Seifert-fibered or atoroidal, thus showing that the theory of Jaco-Shalen~\cite{js:seifert} and Johannson~\cite{joh:hom} is difficult to extend to open 3-manifolds. This construction was inspired by Scott's work, so $M_3$ also has a Cantor set's worth of ends, and does not belong to the class $\calC$. The goal of this paper is to give an example of a manifold in $\calC$ with the same property.

In order to state the result, we recall some terminology. Let $M$ be an orientable 3-manifold. It is \bydef{irreducible} if every embedded 2-sphere in $M$ bounds a 3-ball. An embedded torus in $M$ is \bydef{incompressible} if it is $\pi_1$-injective. Following W.~Neumann and G.~Swarup~\cite{ns:canonical}, we call an embedded torus $T$ in $M$  \bydef{canonical} if it is incompressible and for every embedded, incompressible torus $T'\subset M$, there is an embedded torus $T''$ isotopic to $T'$ such that $T\cap T''=\emptyset$. 

\begin{defi}
Let $\calT=\{T_i\}_{i\in I}$ be a family of pairwise disjoint canonical tori in $M$. We say that $\calT$ is \bydef{complete} if every canonical torus in $M$ is isotopic to $T_i$ for some $i\in I$.
\end{defi}

\begin{theo}\label{thm:manifoldone}
There is an open 3-manifold $M_0$ in the class $\calC$ with the following properties.
\begin{enumerate}
\item $M_0$ is irreducible.
\item Every complete family of canonical tori in $M_0$ fails to be locally finite.
\end{enumerate}
\end{theo}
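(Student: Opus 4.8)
The plan is to realize $M_0$ as $M(X,Y,F_+,F_-,f,g)$ for carefully chosen building blocks. I would take $X$ and $Y$ to be \emph{graph manifolds}---finite unions of Seifert fibered pieces glued along incompressible tori---and choose $f$ and $g$ so that none of the Seifert fibrations involved extends across any of the gluings. The feature to engineer is that, besides the interface tori $S_n$ between consecutive blocks, there is a fixed essential annulus $A$ in $X$ through which an infinite sequence of pairwise non-isotopic canonical tori is forced to pass; this is what will prevent any complete family from being pushed out towards the end. Write $N_k=X\cup Y_0\cup\dots\cup Y_{k-1}$ for the natural compact exhaustion.

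Part~(i) is routine. One arranges that $X$ and $Y$ are irreducible with incompressible boundary, which is automatic for the building blocks above once one checks that no piece is a ball and that the boundary tori stay incompressible after gluing. Then any embedded $2$-sphere in $M_0$ lies in some $N_k$ by compactness, and $N_k$ is irreducible, being obtained by gluing irreducible manifolds along incompressible tori, so the sphere bounds a ball in $N_k\subset M_0$.

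The heart of the argument is a classification, up to isotopy, of the incompressible tori of $M_0$. Given such a torus $T$, I would first isotope it so that $|T\cap\bigcup_n S_n|$ is minimal. Innermost-disk and outermost-arc arguments, using irreducibility of $M_0$ and incompressibility of the $S_n$, then show that $T$ meets $\bigcup_n S_n$ in curves essential on both surfaces, so that---unless $T$ lies in a single block---it is cut by $\bigcup_n S_n$ into essential, $\partial$-incompressible annuli, each contained in a single block $Y_n$ or in $X$. A standard analysis of essential surfaces in Seifert fibered pieces (with horizontal pieces excluded by the genericity of the gluings) shows that each such subsurface is vertical, that the vertical pieces must match across the $S_n$, and hence that $T$ is isotopic to one of an explicit list: the interface tori $S_n$; the canonical tori carried internally by the copies of $Y$; and certain ``diagonal'' tori $\Theta_n=A\cup B_n$, where $A$ is the fixed essential annulus in $X$ and $B_n$ is an annulus with $\partial B_n=\partial A$ running out to $Y_n$. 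One then isolates the canonical tori among these, for instance via the criterion of Neumann--Swarup. Carrying this out rigorously---showing that nothing has been overlooked and that the $\Theta_n$ are pairwise non-isotopic (distinguished by the homeomorphism type of the bounded complementary region) and canonical---is where essentially all the work lies; the chief obstacle is the combinatorial bookkeeping across infinitely many interfaces together with the analysis of essential annuli in the Seifert pieces.

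For part~(ii), let $\calT=\{T_i\}_{i\in I}$ be any complete family of canonical tori. Since it is complete, $\calT$ contains, for each $n$, a torus $T_{i(n)}$ isotopic to $\Theta_n$; the indices $i(n)$ are pairwise distinct because the $\Theta_n$ are pairwise non-isotopic. The key point---the delicate part of this step---is that, by the classification, $\Theta_n$ is not isotopic into $M_0\setminus X$, so every torus isotopic to $\Theta_n$ meets the compact set $X$; in particular each $T_{i(n)}$ meets $X$. Hence $X$ meets infinitely many members of $\calT$, so $\calT$ is not locally finite. (Complete families do exist: one realizes the $\Theta_n$ disjointly by stacking parallel copies of $A$ and nesting the annuli $B_n$.) Together with part~(i), this proves the theorem.
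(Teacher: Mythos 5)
Your overall strategy is genuinely different from the paper's, and I believe it contains a gap that is not merely technical.

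The paper does not build $M_0$ directly from graph-manifold blocks. Instead it constructs a $\pi$-orbifold $\oo_0$ with underlying space $\rr^3$, whose singular locus is a carefully linked configuration of three unknotted lines and infinitely many circles grouped into ``boxes''; it exhibits a sequence of pairwise disjoint, nested \emph{pillows} $P_n$ (spheres with four order-two cone points) that are canonical, pairwise non-isotopic (distinguished by how many circles they enclose), and trapped by a compact set $K'$ via linking-number arguments involving the extra line $L_0$. Then $M_0$ is the double branched cover of $\rr^3$ along the singular locus and $T_n := p^{-1}(P_n)$; all the manifold-level properties (irreducibility, incompressibility, non-isotopy, canonicity, trapping) are transferred from the orbifold by equivariant tools: the equivariant Dehn lemma, equivariant isotopy for involutions (\cite{bz:link}), and $\tau$-invariant least-area normal surfaces in the Jaco--Rubinstein theory. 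None of this resembles your graph-manifold construction.

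The gap in your argument concerns the claim that the ``diagonal'' tori $\Theta_n = A\cup B_n$ are canonical. You arrange the interface tori $S_i=\bord N_i$ to be incompressible. Each $\Theta_n$ crosses $S_0,\dots,S_{n-1}$ in curves that are essential on both sides. For $\Theta_n$ to be canonical in the sense of Neumann--Swarup, in particular $S_0$ must admit an isotope disjoint from $\Theta_n$, which (since $S_0$ is separating, one side compact and one side noncompact) forces $\Theta_n$ to be isotopic either into $\Int X$ or into $M_0\setminus X$. The second option contradicts your claim that $\Theta_n$ is trapped by $X$; the first option collapses the whole ``diagonal'' picture: $\Theta_n$ is then just an incompressible torus of the compact manifold $X$, which runs into Kneser--Haken finiteness (one cannot have arbitrarily large pairwise disjoint, pairwise non-parallel families of incompressible tori in $X$), and in any case is no longer distinguishable from the other $\Theta_m$ by ``how far out towards $Y_n$ it reaches.'' In short, a torus that essentially crosses infinitely many of the incompressible interface tori cannot be canonical, and one that is canonical cannot be held in place by a bottleneck annulus inside a single compact piece. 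This is precisely the tension the paper's construction is designed to sidestep: the paper's trapped tori $T_n$ are nested and pairwise disjoint, do not have to cross each other nor any barrier torus, and the trapping mechanism is homological (any isotope of $P_n$ pushed off $K'$ must either engulf $K'$ and hence hit $L_0$, or else have its cone points on the wrong half-lines and enclose circles it cannot enclose), not the existence of a bottleneck essential annulus. Your Part (i) is fine, and your reduction of Part (ii) to ``each $\Theta_n$ is canonical, pairwise non-isotopic, and trapped'' is formally correct, but I see no way to salvage the central canonicity claim in the graph-manifold setup as described.
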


The key to constructing such a manifold is to ensure that there is a sequence $\{T_n\}$ of pairwise nonisotopic canonical tori in $M_0$ which fails to be locally finite no matter how the representatives of the various isotopy classes are chosen, because they are \bydef{trapped} by some compact subset $K\subset M_0$ which has to intersect all of them. In the paper~\cite{maillot:examples}, the $T_n$'s are pairwise nonhomologous, and separate different ends of the manifold $M_3$. By contrast, $M_0$ has only one end, and its $T_n$'s are all null-homologous, making it harder to prove that they are not isotopic.

In order to construct $M_0$, we first construct a \emph{$3$-orbifold} $\oo_0$ with similar properties, interpreted in the orbifold sense. It is irreducible, and contains a sequence $\{P_n\}$ of \bydef{pillows}, i.e.~spheres with four conical points of order 2, which is trapped by some compact set. The orbifold $\oo_0$ has underlying space $\rr^3$ and is designed so that it is easy to see that the $P_n$'s are pairwise nonisotopic. Then the manifold $M_0$ is constructed as a 2-fold cover of $\oo^3$. The $T_n$'s are taken to be the preimages of the $P_n$'s under the covering map.

The structure of the paper is as follows: in Section~\ref{sec:orbi} we construct the $3$-orbifold $\oo_0$ and prove its various properties. In Section~\ref{sec:man} we will prove Theorem~\ref{thm:manifoldone} by constructing the manifold $M_0$ and showing that it has the required properties.

\paragraph{Acknowledgements}
This work is partially supported by Agence Nationale de la Recherche through Grant~ANR-12-BS01-0004.

I would like to thank Gr\'egoire Montcouquiol, Michel Boileau and Luisa Paoluzzi for useful conversations.

\section{The orbifold case}\label{sec:orbi}

\subsection{Definition of the orbifold $\oo_0$}
Throughout the paper, we work in the PL or smooth category, and all manifolds and orbifolds are assumed to be connected and orientable. For terminology about 3-orbifolds, we refer to~\cite{bmp}.

Let $\oo$ be a $3$-orbifold. Two 2-\sbo s $F,F'$ are \bydef{isotopically disjoint} if there is a \sbo\ $F''$ isotopic to $F'$ and disjoint from $F$. An incompressible toric \sbo\  is \bydef{canonical} if it is isotopically disjoint from every incompressible toric 2-\sbo.

Let $K$ be a compact subset of $\oo$. A sequence $\{F_n\}$ of $2$-\sbo s is said to be \bydef{trapped by}  $K$ if no $2$-\sbo\ isotopic to any $F_n$ is disjoint from $K$. Thus, it is impossible to make $\{F_n\}$ locally finite by choosing different representatives of the various isotopy classes.

Let $B$ be a $3$-manifold homeomorphic to the $3$-ball and let $\alpha$ (resp.~$c$) be a properly embedded arc (resp.~circle) in $B$. Assume that $\alpha$ and $c$ are disjoint. Then we say that $\alpha \cup c$ is \emph{trivial} (resp.~a \bydef{Hopf tangle}) if $\alpha$ is trivial and $c$ bounds an embedded disk disjoint from $\alpha$ (resp.~meeting $\alpha$ transversally in a single point.)
To justify the terminology, note that if another $3$-ball $B'$ is glued to $B$ and $\alpha$ is extended to an unknot $c'$ in $B\cup B'$ in the obvious way, then $c\cup c'$ is an unlink (resp.~a Hopf link.)

We now come to our main construction: throughout the article we let $\oo_0$ be a $3$-orbifold with the following properties: its underlying space is Euclidean $3$-space. Its singular locus $\so$
consists in three unknotted, properly embedded lines $L_0$, $L_{12}$, and $L_{34}$, and four sequences of embedded circles $(c_i^{n})$ with $n\in\nn$ and $i\in \{1,2,3,4\}$. The general relative positions of the various components of $\so$ are shown on Figure~\ref{fig:general}. For each $n\in\nn$, $c_1^n \cup c_2^n \cup c_3^n \cup c_4^n$ is a four component link contained in a `box' $B_n$ whose intersection with $L_{12} \cup L_{34}$ consists of a trivial $4$-tangle $\alpha_1^n \cup \alpha_2^n \cup \alpha_3^n \cup \alpha_4^n$, with $\alpha_1^n \cup \alpha_2^n \subset L_{12}$ and
 $\alpha_3^n \cup \alpha_4^n \subset L_{34}$.

\begin{figure}[ht]
\begin{center}
\includegraphics{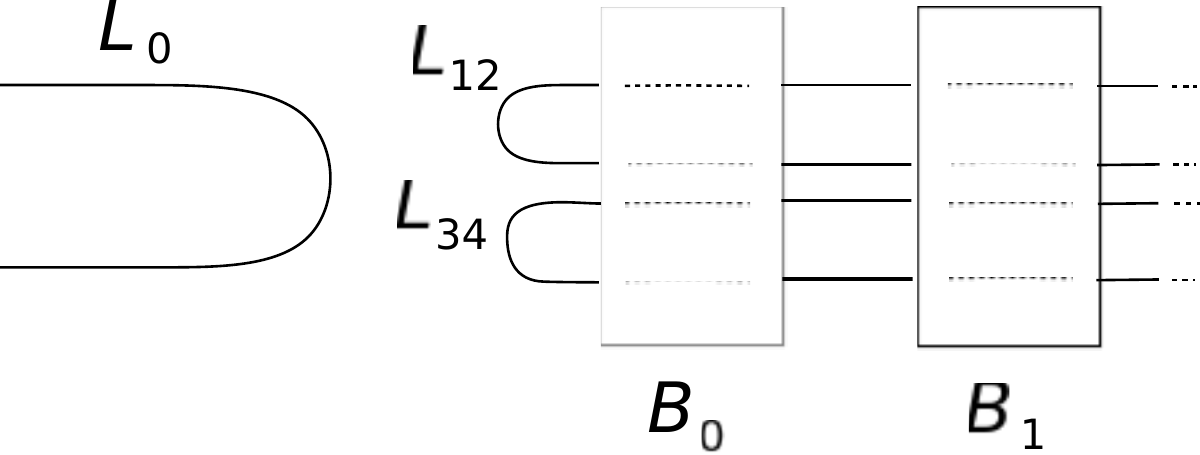}
\end{center}
\caption {\label{fig:general} General configuration of $\so$}
\end{figure}

Furthermore, we will assume that in each box, the $c_i^n$'s and the $\alpha_j^n$ satisfy the following properties:
\begin{enumerate}
\item each $c_i^n$ is unknotted;
\item $c_1^n\cup c_2^n$, $c_2^n\cup c_3^n$, $c_3^n\cup c_4^n$, and $c_4^n\cup c_1^n$ are Hopf links;
\item  $c_1^n\cup c_3^n$ and $c_2^n\cup c_4^n$ are unlinks;
\item for each $(i,j)\in \{1,2,3,4\}^2$, $c_i \cup \alpha_j$ is trivial if $i=j$, and a Hopf tangle otherwise.
\end{enumerate}

It is straightforward to see that such configurations exist, though they are far from being unique. Throughout the text, we assume that a choice has been made once and for all. Moreover, the choice is the same for all values of $n$, since we want the double manifold cover of $\oo_0$ to belong to the class $\calC$. 

What matters is not the precise nature of the content of the boxes, but rather the values of the linking numbers of the various components of $\so$; those are summarized by the graph depicted on~Figure~\ref{fig:linking}, with the convention that the linking number between two components is zero if there is no edge drawn between them, and one otherwise.

\bigskip

\begin{figure}[ht]
\begin{center}
\includegraphics{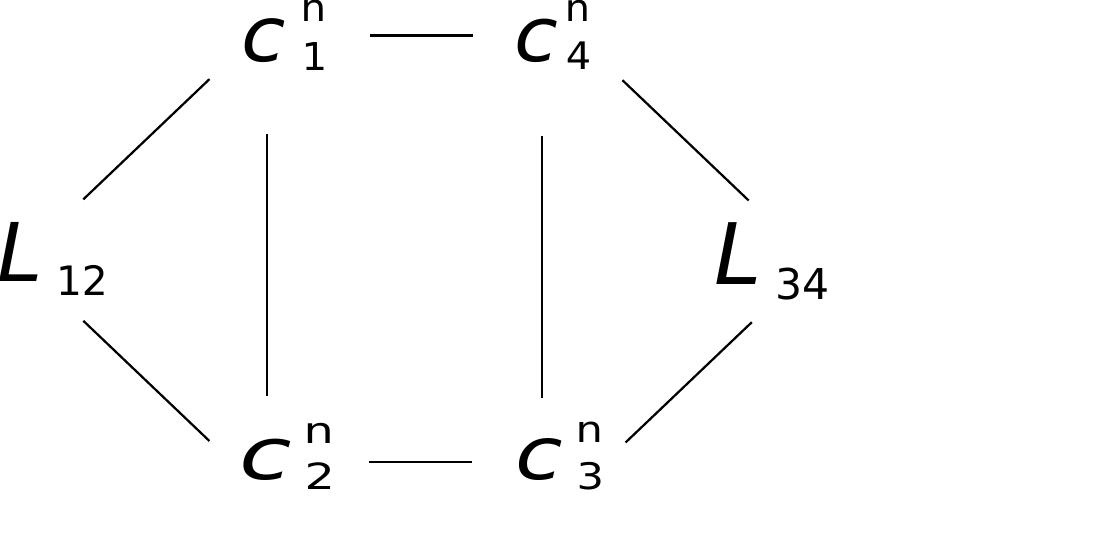}
\end{center}
\caption {\label{fig:linking} Linking numbers of components of $\so$}
\end{figure}

Finally, we assume that $\oo_0$ is a so-called $\pi$-orbifold, i.e.~each nontrivial local group is cyclic of order 2.

\begin{lem}\label{lem:ooirr}
$\oo$ is irreducible.
\end{lem}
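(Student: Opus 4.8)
I would prove the statement directly for the orbifold $\oo_0$ just constructed, working in its underlying space $\rr^3$ and using only the linking data of Figure~\ref{fig:linking}. Recall from~\cite{bmp} that a connected orientable $3$-orbifold is \bydef{irreducible} if it contains no bad $2$-\sbo\ and every spherical $2$-\sbo\ bounds a discal $3$-\sbo. Since $\oo_0$ is a $\pi$-orbifold whose singular locus $\so$ is a disjoint union of properly embedded lines and circles with no vertices, every closed embedded $2$-\sbo\ of $\oo_0$ has underlying space a closed surface embedded in $\rr^3$ transverse to $\so$; and since a $2$-sphere separates $\rr^3$ while every component of $\so$ is a line (non-compact) or a circle, every embedded $2$-sphere meets $\so$ in an \emph{even} number of points. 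Hence $\oo_0$ contains no teardrop $S^2(2)$, no spindle (impossible anyway in a $\pi$-orbifold), and no $S^2(2,2,2)$; so it has no bad $2$-\sbo, and the only spherical $2$-\sbo s it can contain are the sphere $S^2$ (disjoint from $\so$) and the spherical pillow $S^2(2,2)$ (meeting $\so$ transversally in two points). I treat these in turn.

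The only input I need from the construction is: each $c_i^n$ and each of the three lines is unknotted (condition~(i) and the hypothesis on the lines); $\operatorname{lk}(c_i^n,L_{12})\neq 0$ for $i\in\{1,2\}$ and $\operatorname{lk}(c_i^n,L_{34})\neq 0$ for $i\in\{3,4\}$, where for a null-homologous circle $c$ and a properly embedded line $L$ we write $\operatorname{lk}(c,L)$ for the intersection number of a Seifert surface of $c$ with $L$, equivalently the image of $[c]$ in $H_1(\rr^3\setminus L)\cong\zz$; and $\operatorname{lk}(c_i^n,c_j^n)\neq 0$ whenever $c_i^n$ and $c_j^n$ are consecutive in the four-cycle $c_1^n c_2^n c_3^n c_4^n$. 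Consider first an embedded $2$-sphere $S$ disjoint from $\so$: it bounds a ball $B$ in $\rr^3$, its compact side. As $B$ is compact and $S$ is disjoint from the lines, each line lies in the non-compact side, so $B\cap\so$ is a union of whole circles; and if $c_i^n\subset B$, choosing $L\in\{L_{12},L_{34}\}$ with $\operatorname{lk}(c_i^n,L)\neq 0$ we have $c_i^n\subset B\subset\rr^3\setminus L$ with $[c_i^n]=0$ in $H_1(B)=0$, hence $[c_i^n]=0$ in $H_1(\rr^3\setminus L)\cong\zz$, contradicting $\operatorname{lk}(c_i^n,L)\neq 0$. So $B\cap\so=\emptyset$ and $B$ is a discal $3$-\sbo\ bounded by $S$.

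Now let $S$ be an embedded $S^2(2,2)$, i.e.\ a $2$-sphere meeting $\so$ transversally in two points; by the parity remark applied to each component of $\so$ these two points lie on a single component $\gamma$, and then $S$ is disjoint from $\so\setminus\gamma$. Let $B$ be the compact side, so $\gamma\cap B$ is a single properly embedded arc $a$ and every component of $\so$ other than $\gamma$ lies entirely inside $B$ or entirely outside. No line other than $\gamma$ lies inside $B$ (being non-compact and disjoint from $S$, it lies in the non-compact side). No whole circle lies inside $B$: if $c_i^n\subset B$, it links nontrivially $L_{12}$ (if $i\in\{1,2\}$) or $L_{34}$ (if $i\in\{3,4\}$), and that line lies outside $B$ unless it is $\gamma$ itself; if it is $\gamma$, say $\gamma=L_{12}$ with $i\in\{1,2\}$, then $c_i^n\subset B$ forces each of its neighbours in the four-cycle into $B$ (a circle in $B$ Hopf-linked with a circle outside $B$ is null-homologous in $B$, hence has linking number $0$ with that circle), and iterating puts all four of $c_1^n,\dots,c_4^n$ inside $B$; but then $c_3^n\subset B$ links $L_{34}\neq\gamma$ nontrivially while $L_{34}$ lies outside $B$, a contradiction as in the sphere case. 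Hence $B\cap\so=a$. Finally $a$ is unknotted in $B$: closing $\gamma$ up near infinity by a trivial arc if it is a line, we may regard $\gamma$ as an unknotted circle split by $S$ into $a\subset B$ and an arc in the complementary ball, so $\gamma$ is the connected sum of the two corresponding one-string tangles, and additivity of genus under connected sum forces both summands, in particular $a$, to be unknotted. Thus the discal $3$-\sbo\ $(B,a)$ is bounded by $S$, and $\oo_0$ is irreducible.

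The one point that takes real care is the non-vanishing of $\operatorname{lk}(c_i^n,L_{12})$ (resp.\ $\operatorname{lk}(c_i^n,L_{34})$) used above: because linking with an arc is not well defined, one must verify that conditions~(i)--(iv) genuinely prevent the two Hopf tangles a given circle forms with the two sub-arcs of a single line from contributing with opposite signs and cancelling --- this is a direct inspection of the chosen box configuration, equivalently a reading of Figure~\ref{fig:linking}, and I expect it to be the only non-routine step; everything else reduces to the parity count and the homological argument above. (A heavier alternative would present $\oo_0$ as an increasing union $W_0\cup Z\cup Z\cup\cdots$ --- the presentation that also exhibits the double manifold cover in $\calC$ --- show each compact piece irreducible and each gluing pillow incompressible, and conclude from the orbifold gluing lemma together with the compactness of any reducing or bad $2$-\sbo; but the direct argument above is cleaner.)
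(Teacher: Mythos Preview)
Your proof is correct and follows the same strategy as the paper's: parity reduces to nonsingular spheres and footballs, Alexander's theorem supplies a ball, and linking-number arguments exclude stray components of $\so$ from that ball; if anything you are more careful than the paper, which does not explicitly treat the cases $\gamma=L_0$ or $\gamma=c_j^m$ and simply asserts that the arc is unknotted. Your closing worry is also unnecessary: condition~(iv) already forces $|\operatorname{lk}(c_i^n,L_{12})|=1$ for $i\in\{1,2\}$, since inside the box $c_i^n$ forms a trivial tangle with exactly one of $\alpha_1^n,\alpha_2^n$ and a Hopf tangle with the other, while $L_{12}$ contributes nothing outside the box.
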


\begin{proof}
Let $S\subset \oo$ be a $2$-\sbo\ of positive Euler characteristic. Then $|S|$ is a $2$-sphere intersecting $\so$ transversally. For homological reasons, the number of singular points of $S$ is even. Thus $S$ is either a nonsingular $2$-sphere or a football. Furthermore, by Alexander's Theorem, $|S|$ bounds a $3$-ball $B\subset\rr^3$.

Suppose that $S$ is nonsingular. If $B$ contains a singular point, then it contains some compact component $c_i^n$ of $\so$. Since $c_i^n$ is unknotted, it bounds a disk $D\subset\Int B$. Either $L_{12}$ or $L_{34}$ has linking number one with $c_i^n$. Thus for $L$ equal to $L_{12}$ or $L_{34}$ we have $D\cap L\neq\emptyset$, hence $B\cap L\neq\emptyset$. Since $L$ is noncompact, this contradicts the assumption that $S$ is nonsingular. Therefore, $B$ is nonsingular.

Suppose now that $S$ has two singular points. Those two points must belong to either $L_{12}$ or $L_{34}$. By symmetry we may assume it is $L_{12}$. Then $B\cap L_{12}$ is an unknotted arc. We need to show that $B\cap \so$ is in fact equal to this arc. Arguing by contradiction, assume that some $c_{i_0}^n$ is contained in $B$. Arguing as above using linking numbers, we can show that for every $i$, $c_i^n$ is contained in $\Int B$. Thus for the same reason $L_{34}$ intersects $B$, leading to a contradiction.
\end{proof}

\subsection{The pillows $P_n$ and their first properties}

Let us define the sequence of pillows $(P_n)$ and the compact subset $K'$ which traps them. As shown on Figure~\ref{fig:pillows} each $P_n$ meets $L_{12}$ and $L_{34}$ both twice; the $3$-ball bounded by $|P_n|$ contains a given box $B_m$ if and only if $m\le n$; finally, $K'$ is a $3$-ball meeting each of $L_0$, $L_{12}$, and $L_{34}$ in an unknotted arc, and for every $n$ the intersection of $K'$ with $P_n$ is a nonsingular disk.

For future reference, we let $\Pi$ denote the properly embedded plane shown on Figure~\ref{fig:pillows} and $H_0,H_1$ the closed half-spaces bounded by $\Pi$, so that $L_0\subset H_0$ and $H_1$ contains all other components of $\so$.

\begin{figure}[ht]
\begin{center}
\includegraphics{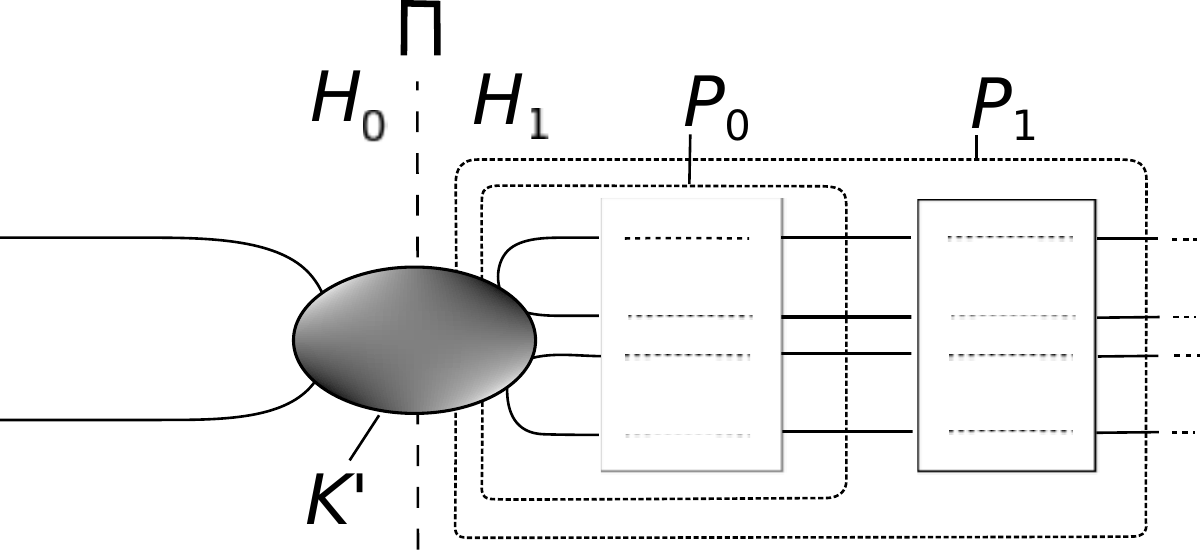}
\end{center}
\caption {\label{fig:pillows} The canonical pillows $P_n$, the compact set $K'$, and the plane $\Pi$}
\end{figure}

For future reference, for each $n$ we let $U_n$ denote the $3$-ball bounded by $|P_n|$ and set $X_n:=|\oo_0| \setminus \Int B_n$. Thus $X_n$ is noncompact and $\bord X_n=P_n$. 

We collect in the next proposition some facts about the $P_n$'s which are fairly easy to prove.
\begin{lem}\label{lem:first}
\begin{enumerate}
\item Each $P_n$ is incompressible.
\item The $P_n$'s are pairwise nonisotopic.
\item The $P_n$'s are trapped by $K'$.
\end{enumerate}
\end{lem}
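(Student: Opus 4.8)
\medskip\noindent\textbf{Plan of proof.} Two elementary remarks do most of the work. \emph{(a)} An isotopy of $\oo_0$ is a path $(\phi_t)_{t\in[0,1]}$, with $\phi_0=\mathrm{id}$, through self-diffeomorphisms of the orbifold $\oo_0$; each $\phi_t$ preserves the singular locus $\so$ setwise and hence permutes its components, and as this permutation is locally constant in $t$ and trivial at $t=0$, every $\phi_t$ --- in particular $\phi_1$ --- fixes each of $L_0,L_{12},L_{34}$ and each $c_i^m$ setwise. \emph{(b)} As $|P_n|$ is a $2$-sphere in $|\oo_0|=\rr^3$, it has a bounded complementary domain $U_n$ and an unbounded one $X_n$, and any homeomorphism of $\rr^3$ taking $|P_n|$ onto $|P_m|$, or onto $|F|$ for a pillow $F$, sends the compact ball $U_n$ onto the bounded complementary domain of the image. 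Recall also that $\so\cap U_n$ consists precisely of the $4n$ circles $c_i^m$ with $m\le n$, together with one arc of $L_{12}$ and one of $L_{34}$, and that $L_0\cap U_n=\emptyset$. Statement~(ii) is then immediate: an isotopy carrying $P_n$ to $P_m$ carries $U_n$ onto $U_m$ and, by~(a), fixes each $c_i^k$, so $\{c_i^k:k\le n\}=\{c_i^k:k\le m\}$ and $n=m$.

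\medskip\noindent For statement~(i), suppose $P_n$ is compressible. Since $\oo_0$ is irreducible (Lemma~\ref{lem:ooirr}) and $|P_n|$ carries four conical points, $P_n$ bounds no discal $3$-\sbo, so the orbifold loop theorem yields a discal compressing disk $D$ for $P_n$, either nonsingular or a copy of $D^2(2)$ meeting $\so$ transversally once. If $D\cong D^2(2)$, then compressing $P_n$ along $D$ produces a turnover $S^2(2,2,2)$ which, by irreducibility, bounds a discal $3$-\sbo\ whose singular locus is a tripod --- impossible, since $\so$ is a $1$-manifold. If $D$ is nonsingular, it is disjoint from $\so$; cutting along $D$ splits $P_n$ into two footballs $S^2(2,2)$, each of which, by irreducibility, bounds a discal ball. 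If $D\subset U_n$, these two balls make up $U_n$, so $\so\cap U_n$ would be just two arcs, contradicting that it contains the $4n$ circles $c_i^m$, $m\le n$. If $D\subset X_n$, one of the two footballs bounds a compact region containing $U_n$, which by irreducibility ought to be a discal ball, yet it contains those same $4n$ circles --- again impossible. Hence $P_n$ is incompressible.

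\medskip\noindent For statement~(iii), fix $n$, let $F$ be a $2$-\sbo\ isotopic to $P_n$, say $\phi_1(P_n)=F$, and suppose $F\cap K'=\emptyset$; put $U_F:=\phi_1(U_n)$, with complement $X_F$. By~(a) and~(b), $\phi_1$ fixes $L_0$, so $L_0\cap U_F=\emptyset$, and $U_F$ contains exactly the circles $c_i^m$ with $m\le n$. As $K'$ is connected and disjoint from $F$, it lies in $U_F$ or in $X_F$; since $K'$ meets $L_0$ while $U_F$ does not, necessarily $K'\subset X_F$. Contradicting this is the heart of the matter. Here one uses how $K'$ sits: for every $m$, the disk $K'\cap P_m$ cuts $K'$ into a ball in $U_m$ containing the arcs $K'\cap L_{12}$ and $K'\cap L_{34}$, and a ball in $X_m$ containing $K'\cap L_0$; in particular $K'$ meets $\Int U_m$ for every $m$. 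Pick an index $n'<n$. Since $P_{n'}$ and $F$ are incompressible (statement~(i)) and isotopic, and $F$ is disjoint from $K'$, the standard cut-and-paste making $P_{n'}$ disjoint from $F$ can be performed by isotopies supported away from $K'$ and not moving the disk $K'\cap P_{n'}$; this yields a pillow $P_{n'}^*$, isotopic to $P_{n'}$, disjoint from $F$, still meeting $\Int U_{n'}^*$. Now $U_{n'}^*$ and $U_F$ are disjoint compact balls, $U_{n'}^*$ containing fewer singular circles than $U_F$ but sharing at least one with it; so $U_{n'}^*\subset U_F$, whence $K'\subset X_F\subset X_{n'}^*$, forcing $K'\cap U_{n'}^*\subset P_{n'}^*$ --- contrary to $K'$ meeting $\Int U_{n'}^*$. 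For the innermost pillow, where no such $n'$ exists, a similar but more delicate argument is required, exploiting the explicit configuration near $L_0$ and the innermost box in place of the nesting above. I expect this last part --- carrying out the cut-and-paste away from $K'$ and disposing of the innermost pillow --- to be the main obstacle, as it is where one must reason with the concrete geometry of the construction rather than with homological data alone.
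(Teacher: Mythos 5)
Parts (i) and (ii) of your proposal are essentially correct. Your argument for (ii), using that an ambient isotopy $\phi_1$ fixes each component of $\so$ setwise and therefore $\phi_1(U_n)=U_m$ must contain exactly the same singular circles as $U_n$, is a clean restatement of the paper's observation that the two balls enclose different numbers of closed components of $\so$. Your case analysis for (i) (nonsingular compressing disk vs.\ $D^2(2)$, and in the nonsingular case the position of the compressing disk relative to $U_n$) is more detailed than the paper's one-line remark but reaches the same conclusion correctly.

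Part (iii) is where the proposal has genuine gaps, and you have in fact flagged them yourself. First, the pivotal claim that ``the standard cut-and-paste making $P_{n'}$ disjoint from $F$ can be performed by isotopies supported away from $K'$ and not moving the disk $K'\cap P_{n'}$'' is asserted, not proved, and is not obviously true. An innermost disk of $P_{n'}\cap F$ on the side of $P_{n'}$ may well contain the disk $K'\cap P_{n'}$, in which case the disk swap moves $P_{n'}$ inside $K'$. Moreover, the innermost-disk procedure only removes intersection circles that are trivial on both surfaces; it does not on its own eliminate essential circles, which must be ruled out or handled separately (as the paper does explicitly in the proof of Proposition~\ref{prop:oocan}). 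Second, the case $n=0$, where there is no $n'<n$, is simply left open. Third, a small but telling slip: you say ``$P_{n'}$ and $F$ are \ldots\ isotopic,'' but by (ii) they are not; what is true is that $F$ is isotopic to $P_n$, which is \emph{disjoint} from $P_{n'}$, and this is what would let you disjoin them --- but via an ambient isotopy with no control near $K'$.

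The paper's own argument for (iii) is more direct and sidesteps all of this: having observed (your Case 1) that $K'\subset X_F$, it works purely with linking numbers and the images of the four half-lines $L_{12}\setminus\Int K'$, $L_{34}\setminus\Int K'$, noting that the two singular points of $P'_n$ on $L_{12}$ must lie on the \emph{same} half-line (and likewise for $L_{34}$), while $U'_n$ still contains all four $c^n_i$; the linking numbers from Figure~\ref{fig:linking} then yield the contradiction directly, with no cut-and-paste and no special treatment of $n=0$. You might consider abandoning the nesting argument in favour of that homological one; as it stands, the proof of (iii) is incomplete.
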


\begin{proof}
Proof of~assertion~(i): If some $P_n$ were compressible, then $P_n$ would bound a solid pillow or be contained in a discal $3$-\sbo. Now $U_n$ contains at least four closed components of $\so$, so this is impossible.

Assertion~(ii) follows immediately from the fact that for $n\neq m$, the number of closed components of $\so$ contained in $U_n$ and $U_m$ are different.

Proof of~assertion~(iii): let $n$ be a natural number. Seeking a contradiction, we assume there is a pillow $P'_n$ isotopic to $P_n$ such that $P'_n\cap K'=\emptyset$. By Alexander's Theorem, the $2$-sphere $|P'_n|$ bounds a $3$-ball $U'_n$ in $\rr^3$. There are two cases:

\paragraph{Case 1} $K'$ is contained in $U'_n$. Then $|P'_n|$ is homologous to $\bord K'$ in $\rr^3\setminus \Int K'$. In this case, $P'_n$ must hit $L_0$, which contradicts our assumption that $P'_n$ is isotopic to $P_n$.

\paragraph{Case 2} $U'_n$ is disjoint from $K'$. Let $L_1,L_2$ (resp.~$L_3,L_4$) be the connected components of $\Int K'\cap L_{12}$ (resp.~$\Int K'\cap L_{34}$.) Again by homological reasons, and using the fact that $P'_n$ is isotopic to $P_n$, among the four singular points of $P'_n$, two must lie on some $L_i$ with $1\le i\le 2$, and the other two on some $L_j$ with $3\le j\le 4$. Now $\Int U'_n$ contains all $c_i^n$ for $1\le i\le 4$. As we have already seen several times, this leads to a contradiction.
\end{proof}

\begin{rem}\label{rem:irr}
The argument used to prove Lemma~\ref{lem:ooirr} shows that every $X_n$ is irreducible.
\end{rem}

\subsection{The $P_n$'s are canonical}

If $V$ is a solid torus and $V'$ a solid torus contained in $V$, then the \bydef{geometric winding number} of the pair $(V,V')$, denoted by $\gw (V,V')$, is the smallest natural number $n$ such that there is a meridian disk $D$ of $V$ with the property that $D\cap V'$ has $n$ connected components. A fundamental fact, due to Schubert, is the multiplicativity of $\gw$:

\begin{lem}[Schubert~\cite{schubert:vollringe}]\label{lem:schu}
Let $V,V',V''$ be solid tori such that $V\subset V'\subset V''$. Then $$\gw(V'',V) = \gw(V'',V') \cdot \gw(V',V).$$
\end{lem}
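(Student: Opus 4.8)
The plan is to establish the two inequalities $\gw(V'',V)\le\gw(V'',V')\cdot\gw(V',V)$ and $\gw(V'',V)\ge\gw(V'',V')\cdot\gw(V',V)$ separately; write $n=\gw(V'',V')$, $m=\gw(V',V)$ and $p=\gw(V'',V)$. If $m=0$ then some meridian disk of $V'$ misses $V$, so $V$ lies in a ball inside $V'$, hence inside a ball of $V''$, and $p=0$; likewise $n=0$ forces $p=0$. So we may assume $n,m,p\ge 1$. In particular the core of $V'$ is homologically nonzero in $V''$, and from this it follows by standard arguments that $V''$ and the exterior $W:=V''\setminus\Int V'$ are irreducible and that $\bord V'$ is incompressible in $W$.

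The technical core is a normalization: any meridian disk $D$ of $V''$, placed in general position with respect to $\bord V'$, can be isotoped through meridian disks of $V''$ so that $D\cap V'$ becomes a disjoint union of meridian disks of $V'$, \emph{without increasing} the number of components of $D\cap V$. One first removes the circles of $D\cap\bord V'$ that are inessential in $\bord V'$ by the usual innermost-disk argument: such a circle, together with a subdisk of $D$ and a disk on $\bord V'$, bounds a $3$-ball in $V''$ by irreducibility, and surgering $D$ across that ball on the side avoiding $V$ (possible because $V\subset\Int V'$) removes the circle and does not increase $|D\cap V|$. After this, any innermost essential circle of $D\cap\bord V'$ cuts off a subdisk of $D$ which cannot lie in $W$ (its boundary would be a compressing curve for $\bord V'$ in $W$, and $\bord V'$ is incompressible there), hence lies in $V'$ and is a meridian disk of $V'$; it follows that $D\cap V'$ is a disjoint union of meridian disks of $V'$.

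For the upper bound, take a meridian disk $E$ of $V''$ with $|E\cap V'|=n$ and normalize it: $E\cap V'=\Delta_1\cup\dots\cup\Delta_{n'}$ with $n'\le n$ parallel meridian disks of $V'$. Cutting $V'$ along them produces $n'$ slabs $D^2\times[0,1]$; since all meridian disks of $V'$ are isotopic and $m$ is realized on one of them, an ambient isotopy of $V'$ rel $\bord V'$ (extended by the identity over $V''$) makes $V$ meet every $\Delta_i$ in exactly $m$ meridian disks of $V$. Transporting $E$ by the inverse isotopy yields a meridian disk $E^{*}$ of $V''$ with $E^{*}\cap V=(\Delta_1\cup\dots\cup\Delta_{n'})\cap V$, which has $n'm\le nm$ components; hence $p\le nm$.

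For the lower bound, take a meridian disk $D$ of $V''$ with $|D\cap V|=p$ and normalize it, so that $D\cap V'=\Delta_1\cup\dots\cup\Delta_k$ is a union of meridian disks of $V'$ and $|D\cap V|=p$ still (it cannot drop below $p$, which is the minimum). Since $D$ is a meridian disk of $V''$ meeting $V'$ in $k$ components, $k\ge n$; since each $\Delta_i$ is a meridian disk of $V'$, $|\Delta_i\cap V|\ge m$; and $D\cap V=\bigsqcup_{i=1}^{k}(\Delta_i\cap V)$ because $V\subset V'$. Hence $p=\sum_{i=1}^{k}|\Delta_i\cap V|\ge km\ge nm$, and together with the upper bound this proves $\gw(V'',V)=\gw(V'',V')\cdot\gw(V',V)$. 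The one delicate point in the whole argument is the normalization step: one must check that the cleanup of $D\cap\bord V'$ can always be pushed to the side missing $V$ so that $|D\cap V|$ is not increased, and that positivity of $\gw(V',V)$ really does exclude a compression of $\bord V'$ in $W$; once this bookkeeping is in place, the multiplicativity is pure counting.
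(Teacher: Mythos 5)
The paper does not actually prove this lemma; it is quoted as a classical result of Schubert (the reference \cite{schubert:vollringe}), so there is no proof in the paper to compare with. Your overall strategy --- normalize a meridian disk $D$ of $V''$ so that $D\cap V'$ is a union of meridian disks of $V'$ and then count --- is indeed the standard route to Schubert's multiplicativity, and the degenerate cases and the lower-bound count are fine once the normalization is in hand. But the normalization step, which you yourself flag as the delicate point, has two genuine gaps as written.

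First, you assert that after removing the circles of $D\cap\bord V'$ that are inessential on $\bord V'$, ``it follows that $D\cap V'$ is a disjoint union of meridian disks of $V'$.'' This does not follow. What you have actually shown is that an \emph{innermost-on-$D$} essential circle bounds a meridian disk of $V'$; but $D\setminus(D\cap\bord V')$ may still contain regions in $V'$ that are annuli or more complicated planar pieces (for instance an annulus in $D\cap V'$ running between two parallel meridian circles, with a $W$-region nested inside it). Eliminating these requires a further step --- typically an argument that such an annulus in $D\cap V'$ is boundary-parallel in $V'$ and can be pushed across $\bord V'$, or a compression/outermost argument on the $W$-side --- and one must then re-check that this extra surgery also does not increase $\lvert D\cap V\rvert$. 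Second, in the upper-bound half you take $E$ with $\lvert E\cap V'\rvert=n$ and claim that after normalization $E\cap V'$ consists of $n'\le n$ meridian disks. Since $n=\gw(V'',V')$ is by definition the \emph{minimum} of $\lvert E\cap V'\rvert$ over meridian disks, normalization gives $n'\ge n$, and the inequality $n'\le n$ would only follow if each cut-and-paste move provably does not increase the number of components of $E\cap V'$; this is not automatic (a single cut-and-paste can split a component of $E\cap V'$ into several), and you give no argument for it. Both points are fixable --- the statements are true and the argument is the classical one --- but as written the proof has real holes precisely where you say the bookkeeping lives.
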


If $V$ is a solid torus and $c$ is an embedded circle in $V$, then the geometric winding number of the pair $(V,V')$ where $V'$ is a tubular neighbourhood of $c$ does not depend on the choice of $V'$. We shall denote this number by $\gw(V,c)$

Let $c,c'$ be two disjoint unknots in $\rr^3$. Choose two solid tori $V,V'$ such that $V\cap V'=\emptyset$ and $V$ (resp.~$V'$) is a tubular neighborhood of $c$ (resp.~$c'$.) Viewing $\rr^3$ as $S^3$ minus a point $\infty$, the complement of $\Int V$ in $S^3$ is also a solid torus, which we denote by $V_1$. By hypothesis, we have $V'\subset V_1$. We shall denote by $\gw (c^*,c')$ the geometric winding number of the pair $(V_1,V)$. Again, this does not depend on the various choices. If $c$ is an unknot in $\rr^3$ and $V$ is a solid torus contained in $\rr^3 \setminus c$, then $\gw(c^*,V)$ is defined similarly.

Note that if $c \cup c'$ is an unlink (resp.~a Hopf link), then $\gw(c^*,c')$ equals zero (resp.~one). Hence all geometric winding numbers involving compact components of $\so$ are determined by our construction.

By extension, we can define $\gw(L^*,c)$ when $L$ is an unknotted properly embeded line in $\rr^3$ and $c$ is an embedded circle in $\rr^3$ that misses $L$~: in the one-point compactification $S^3=\rr^3\cup \{\infty\}$, $L$ compactifies to a circle $L\cup\{\infty\}$. Let $V$ be the complement of an open tubular neighborhood of this circle such that $c\subset V$. Then we set $\gw(L^*,c):=\gw(V,c)$.
Again, all such numbers involving components of $\so$ are determined by the construction, and are equal to either zero or one.
Likewise, if $V'$ is a solid torus which misses $L$, then $\gw(L^*,V')$ is defined as $\gw(V,V')$ for a suitable choice of $V$. 

\begin{lem}\label{lem:solid}
Let $V$ be a solid torus in $|\oo_0|$ such that no point of $\bord V$ is singular. Then $\Int V$ contains at most one component of $\so$. If it does, then this component is compact and homotopic to the soul of $V$.
\end{lem}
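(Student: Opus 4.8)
The plan is to separate, among the components of $\so$, the three lines (which cannot even touch $V$) from the circles $c_i^n$, to recognise each circle lying inside $V$ as a core of $V$ via Schubert's multiplicativity, and finally to rule out two such circles by exploiting the linking pattern of $\so$. The first point is immediate: each of $L_0,L_{12},L_{34}$ is a closed non-compact subset of $\rr^3$ and, since $\bord V$ is non-singular, meets $\bord V$ in no point; being connected it therefore lies entirely in $\Int V$ or entirely in $\rr^3\setminus V$, and the former is impossible because $V$ is compact. The same dichotomy holds for each circle $c_i^n$. Hence any component of $\so$ that meets $\Int V$ is one of the $c_i^n$, in particular compact, which already settles the first half of the second assertion.

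Next I would show that any $c=c_i^n$ contained in $\Int V$ is freely homotopic to the soul of $V$, by proving $\gw(V,c)=1$. Let $L$ be whichever of $L_{12},L_{34}$ has nonzero linking number with $c$; by our construction $\gw(L^*,c)=1$. Compactify $L$ to an unknot $\hat L\subset S^3$. Since $V$ is compact and disjoint from $\hat L$ (it misses $L$, and $\infty\notin V$), a thin enough tubular neighbourhood of $\hat L$ is disjoint from $V$, and its complement is an unknotted solid torus $W$ with $V\subset W$; together with a thin tubular neighbourhood $V_c$ of $c$ inside $\Int V$ this produces nested solid tori $V_c\subset V\subset W$. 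Schubert's multiplicativity (Lemma~\ref{lem:schu}) then gives $1=\gw(W,V_c)=\gw(L^*,c)=\gw(W,V)\cdot\gw(V,c)$, so $\gw(V,c)=1$. Consequently some meridian disk of $V$ meets $c$ transversally in a single point, $[c]$ generates $H_1(V)$, and $c$ is freely homotopic to the soul.

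Finally I would show $\Int V$ cannot contain two distinct components $c,c'$ of $\so$. By the previous paragraph both are circles homotopic to the soul, so $[c]=\varepsilon[c']$ in $H_1(V)$ for some $\varepsilon\in\{\pm1\}$; hence, for every component $\gamma$ of $\so$ that is disjoint from $V$ (the three lines, after compactification, among them), $c$ and $\varepsilon c'$ are homologous in $V\subset S^3\setminus\gamma$, so $\operatorname{lk}(\gamma,c)=\varepsilon\operatorname{lk}(\gamma,c')$. Taking $\gamma$ to be the line \emph{not} linked by $c$ forces $c'$ to be unlinked from it as well, so $c$ and $c'$ link the same line, which by the labelling of the construction we may take to be $L_{12}$; thus both belong to $\{c_1^m,c_2^m\}_m$. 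Say $c$ lies in the box $B_p$, and let $c_\ast$ be whichever of $c_3^p,c_4^p$ is adjacent to $c$ in the four-cycle of Hopf links inside $B_p$, so that $\operatorname{lk}(c_\ast,c)\neq0$. If $c_\ast$ lies outside $V$, then $\operatorname{lk}(c_\ast,c')=0$ — since $c'$ is either in another box or is the vertex of $B_p$ diagonally opposite $c_\ast$ — and the displayed identity gives $0\neq\operatorname{lk}(c_\ast,c)=\varepsilon\operatorname{lk}(c_\ast,c')=0$, a contradiction. If instead $c_\ast$ lies inside $V$, then by the previous paragraph $c_\ast$ is homotopic to the soul, hence links $L_{12}$ by the argument just used — contradicting that $c_\ast$ links $L_{34}$. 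Either way we reach a contradiction, so $\Int V$ contains at most one component of $\so$, and that component is compact and homotopic to the soul by the middle step.

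I expect the main obstacle to be the middle step: setting up the three nested solid tori $V_c\subset V\subset W$ correctly — in particular checking $V\subset W$, which uses the compactness of $V$ and the unknottedness of the line $L$ — and correctly importing from the construction the fact that $\gw(L^*,c)=1$. Once the homological identity $\operatorname{lk}(\gamma,c)=\varepsilon\operatorname{lk}(\gamma,c')$ is in hand, the linking bookkeeping of the last step is routine.
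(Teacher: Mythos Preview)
Your proof is correct and follows essentially the same strategy as the paper: both use Schubert's multiplicativity with the line $L_{12}$ or $L_{34}$ to force $\gw(V,c)=1$, and then exploit the prescribed linking pattern of $\so$ to exclude any second component. The only organizational difference is that the paper stays with geometric winding numbers throughout and eliminates the other circles one by one via chains $c\subset V\subset S^3\setminus\gamma$, whereas you pass to ordinary linking numbers once both circles are known to be cores and package the exclusion into the identity $\operatorname{lk}(\gamma,c)=\varepsilon\operatorname{lk}(\gamma,c')$; both routes amount to the same bookkeeping.
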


\begin{proof}
Assume that $\Int V$ contains some component of $\so$. Then this component is compact. Up to changing the notation, we may assume it is $c_1^n$ for some $n$. For brevity, we drop the superscript $n$ in the sequel.

Observe that $c_1\subset V\subset S^3\setminus L_{12}$. Hence by Lemma~\ref{lem:schu}, we have $\gw(L_{12}^*,c_1) = \gw(L_{12}^*,V)\cdot
\gw(V,c_1)$. Since $\gw(L_{12}^*,c_1)=1$, we deduce $\gw(L_{12}^*,V)=\gw(V,c_1)=1$. We also have  $c_1\subset V\subset S^3\setminus L_{34}$. Since $\gw(L_{34}^*,c_1)=0$ and $\gw(V,c_1)=1$, Lemma~\ref{lem:schu} implies that $\gw(L_{34}^*,V)=0$.

Seeking a contradiction, assume that $c_3\subset V$. Then using the chain of inclusions $c_3\subset V\subset S^3\setminus L_{34}$ and the facts that $\gw(L_{34}^*,c_3)=1$ and $\gw(L_{34}^*,V)=0$ we get a contradiction. Hence $V\subset S^3\setminus c_3$. Using the chain $c_1\subset V\subset S^3\setminus c_3$ we obtain $\gw(c_3^*,V)=0$. The same argument applies, \emph{mutatis mutandis}, to $c_4$.

Again we prove by contradiction that $c_2\not\subset V$: otherwise using the chain $c_2\subset V\subset S^3\setminus c_3$ and the fact that $\gw(c_3^*,c_2)=1$, Lemma~\ref{lem:schu} would lead to a contradiction. 

By similar arguments, we exclude $c_3^m$ and $c_4^m$ for $m\neq n$ (using $L_{34}$), then $c_2^m$ (using $c_3^m$), and finally $c_1^m$ (using $c_4^m$.)
\end{proof}

\begin{lem}\label{lem:noannular}
For each $n\in\nn$, $X_n$ does not contain any essential, properly embedded annular $2$-\sbo.
\end{lem}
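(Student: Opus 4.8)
The plan is to show that an essential annular $2$-suborbifold $A$ in $X_n$ cannot exist by a case analysis on the type of $A$. Recall that a properly embedded annular $2$-suborbifold of a $3$-orbifold is either an honest annulus disjoint from the singular locus, or a disk meeting $\so$ in one point (a ``disk with one cone point'', whose boundary is a circle on $\bord X_n = P_n$ going around one of the conical points once), or a disk meeting $\so$ in two points (whose doubled picture is an annulus). Since $\bord X_n = P_n$ is a pillow, the nonsingular annulus case will be handled by noting that $P_n$ is a sphere with four cone points and its only essential simple closed curves in the orbifold sense are the ones separating the cone points into pairs; a nonsingular annulus joining two such curves can be capped off, using irreducibility of $X_n$ (Remark~\ref{rem:irr}), to produce either a compressing disk for $P_n$ (contradicting Lemma~\ref{lem:first}(i)) or a product region, which one rules out by a linking-number/component-count argument as in Lemma~\ref{lem:ooirr}. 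The disk-with-one-cone-point case is immediately excluded because its boundary is a curve on $P_n$ encircling a single cone point, which is compressible in $P_n$, so the disk is not essential.

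The substantive case is the discal annular suborbifold $A$ meeting $\so$ transversally in two points. First I would observe that $\bord A$ consists of one or two essential curves on $P_n$, each of which (in the orbifold sense) separates the four cone points of $P_n$ into two pairs; there are essentially two such isotopy classes on $P_n$. The two singular points of $A$ lie on $L_{12} \cup L_{34}$ (these are the only noncompact, hence the only candidates whose intersection with a properly embedded disk can be forced). The idea is then to use $A$ together with a piece of $P_n$ to bound a region and extract a solid-torus or a ball-with-a-tangle to which the linking-number machinery of Lemmas~\ref{lem:schu} and~\ref{lem:solid} applies. Concretely: cutting $X_n$ along $A$, or rather doubling the discal $A$ along its singular points, one obtains a nonsingular annulus in a naturally associated manifold, and essentiality of $A$ translates into incompressibility and non-boundary-parallelism of that annulus; a standard argument then exhibits a solid torus $V \subset S^3$ with $\bord V$ nonsingular containing some $c_i^m$'s, contradicting Lemma~\ref{lem:solid} as soon as $\Int V$ is seen to contain two distinct components of $\so$ — which it must, since each box $B_m$ inside $U_n$ contributes a full Hopf-linked quadruple $c_1^m \cup c_2^m \cup c_3^m \cup c_4^m$, and an annulus in $X_n$ cannot separate these from one another without being boundary-parallel.

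The main obstacle I anticipate is the bookkeeping in this last case: making precise the passage from ``essential discal annular suborbifold meeting $\so$ in two points'' to ``solid torus with nonsingular boundary containing $\geq 2$ components of $\so$,'' and in particular ruling out the possibility that $A$ is boundary-parallel in $X_n$ after an isotopy — this is where one needs to use both the noncompactness of $L_{12}, L_{34}$ (to constrain which arcs $A \cap \so$ can be) and the fact that every $U_n$ contains at least one, hence at least four, compact components of $\so$ that are pairwise Hopf-linked in the pattern of Figure~\ref{fig:linking}. Once $V$ is produced, Lemma~\ref{lem:solid} gives the contradiction immediately, so the difficulty is entirely in the topological reduction, not in the winding-number computation. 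I would organize the write-up as: (1) recall the classification of annular $2$-suborbifolds and dispose of the one-cone-point and nonsingular cases; (2) in the two-cone-point case, identify $\bord A$ up to isotopy on $P_n$ and the arcs $A \cap \so$; (3) build $V$ and invoke Lemma~\ref{lem:solid}.
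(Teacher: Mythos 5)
Your proposal has the right high-level shape---a case analysis on the type of annular suborbifold, a reduction to a region bounded jointly by $A$ and part of $P_n$, and then linking-number/winding-number arguments---but the central technical step is missing, and in the singular case the mechanism you propose does not actually work.

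First, two smaller issues. A disk with a single cone point of order $2$ has orbifold Euler characteristic $1/2>0$, so it is a discal, not an annular, $2$-suborbifold; that case never arises. Also, in the case that $A$ is a disk with two cone points, $\bord A$ has exactly one boundary circle, not ``one or two.''

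The more serious gap is the passage you yourself flag. You never use the plane $\Pi$, which is what the paper's proof leans on: an innermost-disk argument isotopes $A$ off $\Pi$ and into $H_1$, and the orbifold $H_1\cap X_n$ (after removing or forgetting the appropriate components of $\so$) is a \emph{product} $F\times[0,1]$ with $F\times\{0\}\subset P_n$. In such a product, any incompressible annulus with both boundary circles in $F\times\{0\}$ is automatically boundary-parallel in the product, so one gets a parallelism region for free; the contradiction then comes from analyzing which pieces of $\so$ that region can swallow. Your sketch instead says ``a standard argument then exhibits a solid torus $V$,'' but without the reduction to a product there is no such standard argument, and there is no reason an essential annulus in an arbitrary irreducible manifold should cobound a solid torus with a piece of the boundary.

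Finally, in the genuinely hard case (disk with two order-$2$ cone points), your plan to invoke Lemma~\ref{lem:solid} cannot be executed: that lemma requires a solid torus with \emph{nonsingular} boundary, whereas any region cobounded by $A$ and a piece of $P_n$ has singular boundary (two singular points on the $A$ side and two on the $P_n$ side). ``Doubling $A$ along its singular points'' is not a construction that produces a submanifold of $X_n$, so it does not give you a nonsingular solid torus inside the orbifold. The paper's argument here is of a different kind: after reducing to a product $\pi$-orbifold over an annulus-with-two-cone-points, the parallelism region $B$ is a $3$-ball meeting $\so$ in a trivial $2$-tangle from $L_{i_0}\cup L_{j_0}$; essentiality forces some $c^m_i$ inside $\Int B$, then the irreducibility argument (as in Lemma~\ref{lem:ooirr}) pulls in all of $c^m_1,\dots,c^m_4$, and the unknot $c^m_{k_0}$ must then bound a disk in $|B|$ meeting $L_{k_0}$ or $L_{l_0}$, which is impossible. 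This tangle/disk argument, not Lemma~\ref{lem:solid}, is what closes the singular case, and it is the piece your outline does not supply.
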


\begin{proof}
Arguing by contradiction, we let $A$ be such an annular $2$-\sbo. Thus $A$ is either a nonsingular annulus or a disk with two singular points.

\paragraph{Case 1} Suppose that $A$ is an annulus. An innermost disk argument shows that after an isotopy we may assume that $A\cap\Pi=\emptyset$. Then $\bord A$ is a union of two circles, which viewed on $P_n$ are parallel and have two singular points on each side. Let $Y$ be the manifold obtained from $|H_1\cap X_n|$ by removing small open tubular neighborhoods of $L_{12}$ and $L_{34}$. Then $Y$ has a manifold compactification diffeomorphic to a product $F\times [0,1]$, where $F$ is a sphere with four holes, and $F\times \{0\}$ is the image of a subset of $P_n$. Such a manifold does not contain any essential annulus with both boundary components in $F\times \{0\}$.

Since $A$ is incompressible in $Y$, we deduce that it is boundary-parallel in $Y$. Let $V$ be a parallelism region in $Y$ between $A$ and some annulus $A'$ in $P_n$. Let us show that no component of $\so$ lies in $V$. By Lemma~\ref{lem:solid}, $V$ contains at most one such component, say $c_1^m$ for some $m>n$. For simplicity of notation, set $c_i:=c_i^m$. Let $\gamma$ be a core of $A'$. Then we apply Lemma~\ref{lem:schu} to the chain $\gamma \subset V\subset S^3\setminus c_2$: from $\gw(V,\gamma)=1$ and $\gw(c_2^*,\gamma)=0$ we deduce $\gw(c_2^*,V)=0$. Then Lemma~\ref{lem:schu} applied to $c_1\subset V\subset S^3\setminus c_2$ yields a contradiction. This finishes the proof of Lemma~\ref{lem:noannular} in Case 1.

\paragraph{Case 2} Suppose that $A$ is a disk with two singular points. Again, we may assume that $A$ misses $\Pi$, and therefore lies in $H_1$. The intersection of $X_n$ with $L_{12} \cup L_{34}$ consists of four properly embedded half-lines, which we denote by $L_1,L_2,L_3,L_4$ in such a way that for each $m>n$ and each $1\le i\le 4$, $\alpha_i^m$ is contained in $L_i$.

By hypothesis, there exist two different indices $i_0,j_0$ in $\{1,2,3,4\}$ such that $A$ meets $L_i$ if and only if $i\in\{i_0,j_0\}$. Let $k_0,l_0$ be the two remaining indices. Let $Y$ be the $\pi$-orbifold whose underlying space is obtained from $|H_1 \cap X_n|$ by removing small open tubular neighborhoods of $L_{k_0}$ and $L_{l_0}$, and whose singular locus consists of $L_{i_0} \cup L_{j_0}$. Then $Y$ compactifies as $F\times [0,1]$, with $F$ an annulus (two-holed sphere) with two singular points, $F\times \{0\}$ corresponding to a subset of $P_n$.

As before, we deduce that $A$ is parallel in $Y$ to some disk $D\subset P_n$ with two singular points, which necessarily belong, one  to $L_{i_0}$ and the other to $L_{j_0}$. Let $B$ be the product region between $A$ and $D$, this time viewed as a \sbo\ of $\oo_0$. 
Observe that $B$ is a $3$-ball whose singular locus contains a trivial $2$-tangle consisting of the union of a subarc of $L_{i_0}$ and a subarc of $L_{j_0}$. Since $A$ is essential in $X_n$, $\Int B$ must in addition contain  $c_i^m$ for some $1\le i\le 4$ and $m>n$.

Arguing as in the proof of the irreducibility of $\oo$, we see that 
$\Int B$ contains $c_i^m$ for \emph{i}; in particular it contains $c_{k_0}^m$. We deduce that $c_{k_0}^m$, being an unknot in the ball $|B|$, bounds a disk $D'\subset |B|$. Such a disk must meet either $L_{k_0}$ or $L_{l_0}$. This contradiction completes the proof of
Case~2.
\end{proof}

At last we can prove the main result of this subsection:
\begin{prop}\label{prop:oocan}
Every $P_n$ is canonical.
\end{prop}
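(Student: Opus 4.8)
The plan is to verify the two requirements in the definition of \emph{canonical}. That $P_n$ is incompressible is Lemma~\ref{lem:first}(i), so it remains to show that $P_n$ is isotopically disjoint from every incompressible toric 2-\sbo. Fix such a \sbo\ $T$; since $\oo_0$ is an orientable $\pi$-orbifold, $T$ is either a nonsingular torus or a pillow. I put $T$ in general position with respect to $P_n\cup\so$ and, among all representatives of the isotopy class of $T$ in general position, choose one for which the number of components of $T\cap P_n$ is minimal. I claim this number is $0$; the proposition follows at once, since $P_n$ is then incompressible and isotopically disjoint from every incompressible toric 2-\sbo.

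So assume for contradiction that $T\cap P_n\neq\emptyset$. General position makes $T\cap P_n$ a nonempty finite disjoint union of circles lying in the nonsingular parts of both $T$ and $P_n$. By minimality no such circle can be inessential in $P_n$ or in $T$: an innermost circle bounding a nonsingular disk (resp.\ a disk with one cone point of order $2$) in $P_n$ would, by incompressibility of $T$, also bound a disk (resp.\ disk with one cone point) in $T$, and the two caps would glue up to a $2$-sphere (resp.\ football), which bounds a discal $3$-\sbo\ by irreducibility (Lemma~\ref{lem:ooirr}); pushing $T$ across it would decrease $|T\cap P_n|$. Likewise any circle inessential in $T$ can be removed. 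Hence every circle of $T\cap P_n$ is essential both in $P_n$ and in $T$; and as any two disjoint essential simple closed curves in a torus or in a four-punctured sphere are isotopic, the circles of $T\cap P_n$ are pairwise parallel in $P_n$ and pairwise parallel in $T$.

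Now for the key step. The pillow $P_n$ separates $|\oo_0|=\rr^3$ into $\Int U_n$ and $\Int X_n$, and $T$ is connected and meets $P_n$, so $T\setminus P_n$ has at least one component $A$ contained in $X_n$. Cutting a torus or pillow along a family of pairwise parallel essential circles yields only annular pieces, so $A$ is a properly embedded annular 2-\sbo\ of $X_n$---a nonsingular annulus or a disk with two cone points of order $2$---with $\bord A\subset P_n=\bord X_n$. I claim $A$ is essential in $X_n$. First, $A$ is incompressible: a compressing disk would force a curve of $A$ isotopic to $\bord A$ to be null-homotopic or of order $2$ in $\pi_1^{\mathrm{orb}}(\oo_0)$, whereas $\bord A$, being parallel in $T$ to a component of $T\cap P_n$, is essential in $T$, hence of infinite order in $\pi_1^{\mathrm{orb}}(\oo_0)$ because $T$ is incompressible. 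Second, $A$ is not boundary-parallel: if some component of $T\setminus P_n$ were boundary-parallel in the side containing it, one could select one whose parallelism region is innermost among all of them and push $T$ across it, again decreasing $|T\cap P_n|$. Thus $A$ is an essential, properly embedded, annular 2-\sbo\ of $X_n$, contradicting Lemma~\ref{lem:noannular}. Therefore $T\cap P_n=\emptyset$, and the proposition is proved.

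The conceptual core---that $X_n$ contains no essential annular \sbo---is already contained in Lemma~\ref{lem:noannular}, so what remains is essentially orbifold bookkeeping: verifying that in general position $T\cap P_n$ consists of nonsingular circles, that the various `discal' configurations (nonsingular disks, disks with one cone point, footballs) are excluded or removable using only irreducibility and $\pi_1^{\mathrm{orb}}$-injectivity, and that the push-across isotopies used to lower $|T\cap P_n|$ are legitimate. I expect this bookkeeping, rather than any genuinely new idea, to be the main point to nail down.
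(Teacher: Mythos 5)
Your proof is correct and follows the same strategy as the paper's: minimize the intersection $T\cap P_n$, use incompressibility of both suborbifolds and irreducibility of $\oo_0$ to discard inessential intersection circles, extract an annular piece of $T$ on the $X_n$ side which must be essential by minimality, and invoke Lemma~\ref{lem:noannular}. The paper's version is considerably terser, dispatching the intermediate ``bookkeeping'' (which you carefully spell out, including the classification of the possible annular pieces and the innermost-region argument for essentiality) with the phrase ``a standard argument.''
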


\begin{proof}
Assume that $P_n$ is not canonical. Let $P'_n$ be an incompressible toric $2$-\sbo\ of $\oo_0$ which cannot be isotoped off $P_n$. Since $\oo$ does not contain any turnover, $P'_n$ is a nonsingular torus or a pillow. Assume that $P'_n$ has been isotoped so as to intersect $P_n$ transversally and minimally.

Since both $P_n$ and $P'_n$ are incompressible, a component of $P_n\cap P'_n$ is essential on $P_n$ if and only if it is essential on $P'_n$. Using the irreducibility of $\oo_0$ and a standard argument, one can remove any inessential intersection component between $P_n$ and $P'_n$. Hence, by our assumption of minimality, each component of $P_n\cap P'_n$ is essential.
As a consequence, some connected component $A$ of $X_n\cap P'_n$ is an annular $2$-\sbo, which is essential as a \sbo\ of $X_n$. This contradicts Lemma~\ref{lem:noannular}.
\end{proof}

\section{The manifold $M_0$}\label{sec:man}
In this section, we let $p:M_0\to\oo_0$ be the only two-fold manifold cover of $\oo_0$. In other words, $M_0$ is the two-fold branched cover over $\rr^3$ with branching locus $\so$ and $p$ is the canonical projection. As in the compact case, this cover can be constructed by splitting $\oo_0$ along a Seifert surface for $\so$, taking two copies of the resulting manifold, and gluing them appropriately. Alternatively, the orbifold fundamental group $\pi_1\oo_0$ is (infinitely) generated by \emph{meridians}, i.e.~small circles around each component of $\so$. All meridians have order~2. There is an index~2 normal subgroup $\Gamma$ of $\pi_1\oo_0$ which is the kernel of a homomorphism from $\pi_1\oo_0$ to $\zz/2\zz$ sending all meridians to the nontrivial element. This group is torsion~free, and $p:M_0\to\oo_0$ is the corresponding manifold cover. We denote by $\tau:M_0\to M_0$ the involution such that $\oo_0=M_0/\tau$.

By construction, $\oo_0$ is obtained by gluing $K'$ and countably many copies of some compact 3-orbifold $Y'$ with two boundary components, glued to each other `in the same way'. Moreover, the preimage of $K'$ by $p$ is connected and has connected boundary, and the preimage of each copy of $Y'$ is connected and has exactly two boundary components. It follows that $M_0$ belongs to the class $\mathcal C$ defined in the introduction.

For each $n\ge 0$, set $T_n:=p^{-1}(P_n)$ and $Y_n:=p^{-1}(X_n)$. Thus each $T_n$ is an embedded $2$-torus in $M$. Set $K:=p^{-1}(K')$. Then $K$ is a compact subset of $M$. We need to prove that $M$ has all the properties stated in Theorem~\ref{thm:manifoldone}. 

Since $\oo_0$ is irreducible, $M_0$ is irreducible~(\cite[Theorem~3.23]{bmp}.) For the same reason, Remark~\ref{rem:irr} implies that $Y_n$ is irreducible for every $n$. By the equivariant Dehn Lemma, incompressibility of every $T_n$ follows from Lemma~\ref{lem:first}(i).

\begin{lem}
The $T_n$'s are pairwise nonisotopic.
\end{lem}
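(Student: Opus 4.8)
The plan is to deduce the statement from the already-established fact (Lemma~\ref{lem:first}(ii)) that the $P_n$'s are pairwise nonisotopic in $\oo_0$, using the covering map $p\colon M_0\to\oo_0$ and the involution $\tau$. The key point is that an isotopy between $T_m$ and $T_n$ in $M_0$ should descend to an isotopy between $P_m$ and $P_n$ in $\oo_0$, giving a contradiction. However, a naive push-down does not quite work, because an isotopy upstairs need not be $\tau$-equivariant; so the main obstacle is an equivariance issue, and the tool to resolve it is the equivariant isotopy (or equivariant torus) theorem for the finite group $\langle\tau\rangle\cong\zz/2\zz$, valid here since $M_0$ is irreducible. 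Concretely, I would argue as follows.

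First I would fix $m\ne n$ and note that $T_n=p^{-1}(P_n)$ is a $\tau$-invariant incompressible torus, and that $p|_{T_n}\colon T_n\to P_n$ is the orientation double cover of the pillow $P_n$ branched over its four cone points (equivalently, $T_n/\tau=P_n$ with $\tau$ acting on $T_n$ as an involution with four fixed points). Suppose $T_m$ and $T_n$ are isotopic in $M_0$. Then $T_m$ is an incompressible torus, and by the equivariant torus theorem applied to the action of $\langle\tau\rangle$ on the irreducible manifold $M_0$, there is a $\tau$-\emph{equivariant} isotopy carrying $T_m$ to a $\tau$-invariant torus $T'$ isotopic to $T_n$. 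Since $T'$ is $\tau$-invariant, it projects to a $2$-\sbo\ $P'=p(T')$ of $\oo_0$, and the equivariant isotopy from $T_m$ to $T'$ projects to an isotopy in $\oo_0$ from $P_m=p(T_m)$ to $P'$. On the other hand, $T'$ and $T_n$ are two $\tau$-invariant incompressible tori that are isotopic in $M_0$; I would invoke equivariant uniqueness of the isotopy class (again the equivariant isotopy theorem, or a direct argument using that the region between $T'$ and $T_n$ can be chosen $\tau$-invariant after an equivariant isotopy) to conclude that $T'$ and $T_n$ are $\tau$-equivariantly isotopic, hence $P'$ and $P_n$ are isotopic in $\oo_0$. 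Stringing these together, $P_m$ is isotopic to $P_n$ in $\oo_0$, contradicting Lemma~\ref{lem:first}(ii).

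The step I expect to be the main obstacle is the bookkeeping around equivariance: making sure that (a) one may upgrade an arbitrary isotopy $T_m\simeq T_n$ to a $\tau$-equivariant one landing on a $\tau$-invariant torus, and (b) two $\tau$-invariant incompressible tori that are isotopic in $M_0$ are in fact equivariantly isotopic. Both are instances of the equivariant loop/sphere/torus package for finite group actions on irreducible $3$-manifolds (the same circle of ideas already used in the excerpt via the equivariant Dehn Lemma to get incompressibility of the $T_n$'s). An alternative, more self-contained route that avoids invoking a torus theorem is to work directly downstairs: if $T_m\simeq T_n$ in $M_0$, push the isotopy to an ambient homeomorphism $h$ of $M_0$ with $h(T_m)=T_n$, average or correct $h$ to commute with $\tau$ using irreducibility, and then pass to the quotient to get a self-homeomorphism of $\oo_0$ carrying $P_m$ to $P_n$ isotopic to the identity—again contradicting Lemma~\ref{lem:first}(ii). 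Either way, once equivariance is arranged the contradiction is immediate, so the content of the lemma is entirely in the equivariant reduction.
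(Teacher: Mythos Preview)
Your approach is correct and is essentially the same as the paper's: reduce to Lemma~\ref{lem:first}(ii) by upgrading the isotopy to a $\tau$-equivariant one and then projecting to $\oo_0$. The paper does this in one line by citing \cite[Prop.~4.5]{bz:link}; note that your step~(a) is unnecessary, since $T_m=p^{-1}(P_m)$ is already $\tau$-invariant by construction, so only your step~(b) is needed and the intermediate torus $T'$ can be dropped.
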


\begin{proof}
Assume that there exist $n,m$ with $n\neq m$ such that $T_m$ is isotopic to $T_n$. By~\cite[Prop~4.5]{bz:link}, there is a $\tau$-equivariant isotopy. Hence $P_m$ is isotopic to $P_n$, contradicting Lemma~\ref{lem:first}(ii).
\end{proof}

Our next goal is to show that every $T_n$ is canonical. For this we need a lemma.

\begin{lem}\label{lem:noannman}
There is no properly embedded essential annulus in $Y_n$.
\end{lem}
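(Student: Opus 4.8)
The plan is to lift the obstruction to the orbifold $\oo_0$ and invoke Lemma~\ref{lem:noannular}. Suppose $A\subset Y_n$ is a properly embedded essential annulus. Since $Y_n = p^{-1}(X_n)$ carries the free involution $\tau|_{Y_n}$ (the restriction of the covering involution), the first step is to put $A$ in general position with respect to $\tau(A)$ and then apply the equivariant annulus/torus theorem, or more elementarily an equivariant incompressibility argument in the spirit of~\cite[Prop~4.5]{bz:link}, to replace $A$ by a $\tau$-equivariant essential annulus: either $A$ is disjoint from $\tau(A)$, or $A=\tau(A)$ (possibly after an equivariant isotopy, using that $Y_n$ is irreducible by Remark~\ref{rem:irr} together with the discussion preceding this lemma). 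In the first case, $p(A)$ is an embedded essential annulus in $X_n$ with no singular points, contradicting Lemma~\ref{lem:noannular}. In the second case, $\tau$ acts on the annulus $A$; since $\tau$ is free and orientation-preserving, its restriction to $A$ is a free involution of the annulus, so $A/\tau$ is again an annulus and $\tau|_A$ is conjugate to a deck transformation of the connected double cover of the annulus. The quotient $p(A)$ is then a properly embedded $2$-\sbo\ of $X_n$ whose underlying space is an annulus and whose singular locus is the (at most two) fixed points of $\tau$ restricted to $A$ — but $\tau$ has no fixed points, so the branch locus $\so$ meets $p(A)$ exactly where $A$ crosses $p^{-1}(\so)$; a short count shows $p(A)$ is an annular $2$-\sbo\ (a nonsingular annulus, or a disk with two conical points of order $2$) of $X_n$.

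The key point is that $p(A)$ is \emph{essential} in $X_n$. Incompressibility of $p(A)$ follows from incompressibility of $A$ in $Y_n$ by covering space theory (a compressing disk for $p(A)$ would lift, since $\pi_1 A \to \pi_1 X_n$ factors through $\pi_1 Y_n$ after passing to the relevant index-$2$ subgroup, and one uses the equivariant Dehn Lemma exactly as in the argument giving incompressibility of the $T_n$). Similarly, $p(A)$ is not boundary-parallel in $X_n$: a product region between $p(A)$ and a \sbo\ $D\subset P_n$ would lift to a product region between $A$ and a subsurface of $T_n = p^{-1}(P_n)$, contradicting that $A$ is essential in $Y_n$. (Here one uses that $P_n$ is incompressible, Lemma~\ref{lem:first}(i), so that $p^{-1}(P_n)=T_n$ is a single torus and components of $T_n\cap p^{-1}(A)$ behave as expected.) Thus $p(A)$ is an essential annular $2$-\sbo\ of $X_n$, contradicting Lemma~\ref{lem:noannular}, and the lemma follows.

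The main obstacle is the equivariant step: arranging that the essential annulus $A$ can be chosen $\tau$-invariant or $\tau$-disjoint. The cleanest route is to cite the equivariant version of the annulus theorem (Jaco--Shalen / Johannson, equivariant form, as in Meeks--Yau and its orbifold adaptations used throughout~\cite{bmp}), applied to the Seifert-fibered or characteristic submanifold picture for $Y_n$; alternatively one runs an innermost-curve argument on $A\cap\tau(A)$ using irreducibility of $Y_n$ to remove trivial intersection circles, and then an outermost-arc argument to reduce arc intersections, exactly as in the proof that isotopic $T_n$'s are equivariantly isotopic. I would expect a few routine case checks here (circles vs.\ arcs of $A\cap\tau(A)$, and the orientation behaviour of $\tau$ on $A$ when $A=\tau(A)$), but no serious difficulty, since the hard analytic content — the nonexistence of essential annular \sbo s — has already been established downstairs in Lemma~\ref{lem:noannular}.
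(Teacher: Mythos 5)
The initial reduction — to the case where $A$ is disjoint from $\tau(A)$ or equal to it — is exactly the hard part of the paper's proof, and you dismiss it as routine. The paper chooses $A$ of least area in a $\tau$-invariant Riemannian metric with mean-convex boundary, uses the Meeks--Yau trick to reduce inessential curves and arcs of $A\cap\tau(A)$ by exchange/roundoff, and then has to deal separately with \emph{essential} curves and \emph{essential} arcs of intersection. Neither of these is handled by an innermost-circle or outermost-arc argument: essential intersection components cannot simply be swapped away. The paper instead passes to the infinite cyclic cover $Y_\gamma$ (for essential curves) and the cover $Y_T$ with $\pi_1=\pi_1(T_n)$ (for essential arcs), proves two sublemmas constraining how lifts of $A$, $\tau(A)$ meet $\partial Y_\gamma$ resp.\ $\partial Y_T$, and uses a careful cut-and-paste on the subannuli $A_1,\dots,A_6$ (resp.\ a bigon argument) together with the least-area choice of $A$ and the noncompactness of $Y_T$. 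Citing an ``equivariant annulus theorem'' is not a substitute: the standard Meeks--Yau / Jaco--Rubinstein equivariant results apply to compact manifolds, and $Y_n$ is noncompact — the paper explicitly leans on that noncompactness ($Y_T$ is not $T^2\times I$). The reference \cite[Prop~4.5]{bz:link} concerns equivariant isotopy of isotopic tori, not existence of an equivariant essential annulus, so it does not give the reduction either.

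There is also a factual error: $\tau$ is \emph{not} a free involution on $Y_n$. The manifold $M_0$ is the double \emph{branched} cover over $\rr^3$ with branching locus $\so$, so the fixed-point set of $\tau$ is $p^{-1}(\so)$, and $Y_n=p^{-1}(X_n)$ meets it. Your paragraph asserting that $\tau$ is free, and then deducing that $p(A)$ has no singular points coming from fixed points, contradicts the construction (and also contradicts itself: you introduce ``fixed points of $\tau$ restricted to $A$'' and two sentences later assert there are none). The correct statement, which the paper uses without elaboration, is simply that if $A=\tau(A)$ or $A\cap\tau(A)=\emptyset$ then $p(A)$ is an annular $2$-\sbo\ of $X_n$ (a nonsingular annulus or a disk with two conical points of order $2$); its essentiality downstairs is what contradicts Lemma~\ref{lem:noannular}. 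In short, the outline of your reduction and the use of Lemma~\ref{lem:noannular} match the paper's strategy, but the equivariant step that you wave off is the substantive content of the proof and is not filled in.
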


\begin{proof}
Put a $\tau$-invariant riemannian metric on $Y_n$ with mean-convex boundary. Arguing by contradiction, we assume that $Y_n$ contains a properly embedded essential annulus. Let $A$ have least area among such annuli. Set $A':=\tau(A)$. If $A\cap A'=\emptyset$ or $A=A'$, then $p(A)$ is an annular $2$-\sbo\ in $X_n$. Thus by Lemma~\ref{lem:noannular}, we deduce that $p(A)$ is inessential. This implies that $A$ is inessential, contradicting our hypothesis.

Hence by the Meeks-Yau trick we may assume that $A$ and $A'$ intersect transversally in a nonempty disjoint union of curves and arcs. Since both $A$ and $A'$ are essential, a curve or arc is essential on $A$ if and only if it is esssential on $A'$.

If there exists an inessential curve in $A\cap A'$, we let $D$ be a disk of minimal area on $A$ or $A'$ bounded by such a curve. Note that $D$ is automatically innermost. Thus a classical exchange/roundoff argument leads to a contradiction (the disk exchange can be realized by an isotopy because $Y_n$ is irreducible.) 

If there is an inessential arc in $A\cap A'$, we can argue similarly using a disk of minimal area cobounded by such an arc and an arc in $\bord Y_n$. In order to ensure that the disk exchange can be realized by an isotopy, we use the irreducibility of $Y_n$ and the incompressibility of $\bord Y_n$.

Suppose there is an essential curve in $A\cap A'$. Let $\gamma$ be such a curve. Pick a basepoint on $\gamma$ and consider the covering space $Y_\gamma$ of $M$ whose fundamental group is the infinite cyclic group generated by $\gamma$. Let $\tilde A$ and $\tilde A'$ be lifts to $Y_\gamma$ of $A$ and $A'$ respectively. Observe that the inclusions of $\tilde A$ and $\tilde A'$ in $Y_\gamma$ are homotopy equivalences. In particular,  $\tilde A$ and $\tilde A'$ are incompressible.

\begin{slem}\label{slem:boundary}
\begin{enumerate}
\item The two boundary components of $\tilde A$ lie on distinct components of $\bord Y_\gamma$.
\item The same property holds for $\tilde A'$.
\end{enumerate}
\end{slem}

\begin{proof}
It suffices to prove the first point. We argue by contradiction, letting $F$ be a component of $\bord Y_\gamma$ containing all of $\bord \tilde A$. Observe that $F$ is an open annulus whose inclusion in $Y_\gamma$ is a homotopy equivalence. Hence there is an annulus $A''$ contained in $F$ and such that $\bord A''=\bord \tilde A$. Thus $T:=\tilde A \cup A''$ is an embedded torus. Since $\pi_1(Y_\gamma)$ is cyclic, $T$ is compressible. As $T$ contains an incompressible annulus, it does not lie in a $3$-ball. Therefore, $T$ bounds a solid torus $V$, which is a parallelism region between $\tilde A$ and $A''$. In particular, $\tilde A$ is isotopic to $A''$ relative to the boundary. Projecting such an isotopy to $Y_n$ leads to a contradiction with the fact that $A$ is essential. This completes the proof of Sublemma~\ref{slem:boundary}.
\end{proof}

Note that in the previous discussion, $\gamma$ was an arbitrary essential curve of intersection between $A$ and $A'$. Now we make a more specific choice of $\gamma$: we assume that some annulus cobounded by $\gamma$ and some boundary component of $A$ or $A'$ has least possible area. Without loss of generality, we assume that such an annulus, called $A_1$, is contained in $A$. We let $A_2$ be the subannulus of $A$ whose boundary is the union of $\gamma$ and the other component of $\bord A$, and $A_3,A_4$ be the annuli such that $A_3\cap A_4=\gamma$ and $A_3\cup A_4=A'$.

We set $A_5:=A_1\cup A_3$ and $A_6:=A_1\cup A_4$. Then $A_5$ and $A_6$ are properly embedded annuli in $X_n$. As before we consider lifts $\tilde A_5$ and $\tilde A_6$ of those annuli to $Y_\gamma$. By the second assertion of Sublemma~\ref{slem:boundary}, at least one of $\tilde A_5$ and $\tilde A_6$ has its two boundary components on different components of $\bord Y_\gamma$. Assume $\tilde A_5$ has this property. Then $A_5$ is essential: otherwise one could lift a boundary-parallelism region for $A_5$ to $M_\gamma$ and obtain one for $\tilde A_5$. Now by choice of $A_1$, the area of $A_5$ is less than or equal to that of $A$, and by rounding the corner, we obtain an essential annulus which contradicts the minimizing property of $A$.

Thus we are left with the case where $A\cap A'$ contains an essential arc.

\begin{slem}
There are at least two distinct essential arcs in $A\cap A'$.
\end{slem}

\begin{proof}
Let $\gamma_1,\gamma_2$ denote the boundary components of $A$ and $\gamma_3,\gamma_4$ denote those of $A'$. Thus $\gamma_1$ (resp.~$\gamma_3$) is isotopic to $\gamma_2$ (resp.~$\gamma_4$) on $T_n$. If $A\cap A'$ consists of only one essential arc $\alpha$, then there are only two points in $\bord A\cap \bord A'$. If $\gamma$ is not isotopic to $\gamma_3$, then $\gamma_1\cap \gamma_3$ is nonempty, and so are $\gamma_1\cap\gamma_4$, $\gamma_2\cap\gamma_3$, and $\gamma_2\cap\gamma_4$, resulting in a contradiction.

Hence the $\gamma_i$'s are all pairwise isotopic, and the two intersection points belong to the same two arcs, which means that $\alpha$ is inessential, again a contradiction.
\end{proof}

Choose as basepoint an endpoint of some arc in $A\cap A'$ and let $Y_T$ be the covering space of $Y_n$ such that $\pi_1(Y_T)=\pi_1(T_n)$. Let $U_n$ be a torus component of $\bord Y_T$ whose projection is $T_n$, the said projection being a homeomorphism. Let $\tilde A$ and $\tilde A'$ be lifts to $Y_T$ of $A$ and $A'$ respectively, which both meet $U_n$.

\begin{slem}
\begin{enumerate}
\item The two components of $\bord\tilde A$ lie on distinct components of $\bord Y_T$.
\item The same holds for $\tilde A'$.
\end{enumerate}
\end{slem}

\begin{proof}
Again we only have to prove the first statement. Arguing by contradiction, assume both components of $\bord\tilde A$ belong to the same component of $\bord Y_T$. By construction, this component is $U_n$. Then there is an annulus $A''\subset U_n$ such that $\bord A''=\bord \tilde A$. Set $T=A''\cup \tilde A$. Then $T$ is an embedded torus in $Y_T$. By pushing $T$ slightly we get $T'\subset \Int Y_T$.

Suppose that $T$ (or equivalently $T'$) is compressible. Since $A''$ is not contained in any $3$-ball, neither is $T$, so $T$ bounds a solid torus $V$. Since the core of $A''$ is a primitive element of $U_n$, $V$ is a parallelism region between $A''$ and $\tilde A$. This shows that $\tilde A$ is inessential, and so is $A$, a contradiction.

Hence $T$ is incompressible. Note that $H_2(Y_T)$ is isomorphic to $\zz$ and generated by the class of $U_n$. Thus $T$ is either null-homologous or homologous to $U_n$.

If $T$ is null-homologous, then $T$ bounds some compact submanifold $V$. Since $T$ is incompressible in $Y_T$, it is also incompressible in $V$, and by van Kampen's theorem, the induced homomorphisms $\pi_1 T\to \pi_1V \to \pi_1 Y_T$ are both injective. It follows that $\pi_1V$ is isomorphic to $\zz^2$, which is impossible with $\bord V$ connected.

Thus $T$ is homologous to $U_n$, and so is $T'$. Let $X$ be a compact submanifold whose boundary is $U_n\cup T'$. Arguing as before, we see that $\pi_1X$ is isomorphic to $\zz^2$. Since $X$ is irreducible, $X$ is homeomorphic to $T^2\times I$. Again this implies that $\tilde A$ is boundary-parallel.
\end{proof}

We can now finish the proof of Lemma~\ref{lem:noannman}. Since $Y_T$ covers a noncompact manifold, it is itself noncompact. In particular, $Y_T$ is not homeomorphic to $T^2\times I$. Thus, homology considerations as above show that $U_n$ is the only torus in $\bord Y_T$. It follows that the boundary components $\gamma,\gamma'$ of $\tilde A$ and $\tilde A'$ that do not lie on $U_n$ are contained in some annulus component $A''$ of $\bord Y_T$, and are freely homotopic (and noncontractible) there. Hence there is a bigon between $\gamma$ and $\gamma'$, and we can use an exchange/roundoff argument to get a contradiction. This completes the proof of Lemma~\ref{lem:noannman}.
\end{proof}

The fact that the $T_n$'s are canonical now follow as Proposition~\ref{prop:oocan} followed from Lemma~\ref{lem:noannular}. Hence to prove Theorem~\ref{thm:manifoldone}, we only need to show the following statement:

\begin{lem}\label{lem:trapped}
The compact subset $K=p^{-1}(K')\subset M$ traps each $T_n$.
\end{lem}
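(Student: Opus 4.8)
The strategy is to reduce the statement about $M_0$ to the already-established trapping statement for the orbifold $\oo_0$, namely Lemma~\ref{lem:first}(iii), by passing to the quotient under the involution $\tau$. So suppose, for contradiction, that some $T_n$ can be isotoped off $K$; call the new position $T_n'$, with $T_n' \cap K = \emptyset$. The first step is to replace this isotopy with a $\tau$-equivariant one. This should follow from \cite[Prop~4.5]{bz:link} (the same reference already used to prove that the $T_n$'s are pairwise nonisotopic): since $T_n = p^{-1}(P_n)$ is $\tau$-invariant and $K = p^{-1}(K')$ is $\tau$-invariant, an isotopy carrying $T_n$ off $K$ can be promoted to a $\tau$-equivariant isotopy. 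Then the image $T_n'$ is $\tau$-invariant and still disjoint from $K$.

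Once $T_n'$ is $\tau$-invariant, it descends: $P_n' := p(T_n')$ is a $2$-\sbo\ of $\oo_0$. The key points to check are that $P_n'$ is isotopic to $P_n$ in $\oo_0$, and that $P_n' \cap K' = \emptyset$. The first follows by projecting the $\tau$-equivariant isotopy from $T_n$ to $T_n'$ down to $\oo_0 = M_0/\tau$, which gives an isotopy of $\oo_0$ carrying $P_n$ to $P_n'$. For the second, since $T_n' \cap K = \emptyset$ and $K = p^{-1}(K')$, we have $p(T_n') \cap K' = \emptyset$, i.e.\ $P_n' \cap K' = \emptyset$. But this exactly contradicts Lemma~\ref{lem:first}(iii), which says that no $2$-\sbo\ isotopic to $P_n$ is disjoint from $K'$. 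This contradiction completes the proof.

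**Main obstacle.** The one delicate point is the passage to a $\tau$-equivariant isotopy. The cited result \cite[Prop~4.5]{bz:link} handles equivariance of isotopies between a pair of invariant surfaces; here one is isotoping a single invariant surface, but through positions that are a priori \emph{not} invariant, and one only controls the endpoints. One should argue as follows: the isotopy defines an ambient homeomorphism $h$ of $M_0$ (isotopic to the identity) with $h(T_n) = T_n'$ and $h(T_n) \cap K = \emptyset$; then $\tau' := h \tau h^{-1}$ is an involution of $M_0$ fixing $T_n'$, conjugate to $\tau$, and one needs that such conjugate involutions are themselves conjugate by an ambient isotopy that can be taken to fix $T_n'$ — which is the content of the equivariance results for involutions on Haken manifolds (this is precisely the setting of \cite{bz:link}, and the same hypotheses that made the argument work in the earlier lemma apply here). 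Alternatively, and perhaps more cleanly, one notes that the problem is equivalent to finding a $\tau$-invariant $2$-\sbo\ isotopic to $T_n$ and disjoint from $K$; after making $T_n'$ transverse to $\tau(T_n')$ and arguing as in the proof of Lemma~\ref{lem:noannman} (Meeks--Yau / innermost-disk exchanges to remove the intersection $T_n' \cap \tau(T_n')$, using irreducibility of $M_0$), one can push $T_n'$ to a $\tau$-invariant representative while staying disjoint from the $\tau$-invariant set $K$; then project as above. Either way, the geometric content is routine equivariant surface theory once the setup is in place, and the combinatorial heart of the trapping — the homological linking-number argument — has already been done at the orbifold level in Lemma~\ref{lem:first}(iii).
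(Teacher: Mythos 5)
Your overall strategy is correct and matches the paper's: find a representative of the isotopy class of $T_n$ that is simultaneously $\tau$-invariant and disjoint from $K$, then project to $\oo_0$ and invoke Lemma~\ref{lem:first}(iii). The application of \cite[Prop~4.5]{bz:link} to pass from an invariant representative to an equivariant isotopy of $\oo_0$ is also exactly what the paper does. But the hard part, which you flag as the ``main obstacle,'' is not actually resolved by either of your suggested workarounds, and this is where the real content of the lemma lives.

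First, your conjugation-of-involutions route leaves the crucial point unproven: you need the ambient isotopy conjugating $\tau' = h\tau h^{-1}$ to $\tau$ to be one that does not drag $T_n'$ back across $K$, and there is no reason from general equivariance results on Haken manifolds that this can be arranged. Your second route via Meeks--Yau exchange/roundoff has two concrete gaps. One: performing innermost-disk and outermost-bigon exchanges to disentangle $T_n'$ from $\tau(T_n')$ gives no control whatsoever on the resulting surface's position relative to $K$; the exchanges may very well push it into $K$. The paper resolves this by an equivariant PL minimal surface argument with a specific combinatorial trick: fix a $\tau$-invariant triangulation, take a collar $X$ of $\bord K$, and subdivide on $X$ until any normal surface crossing $X$ has weight exceeding that of $T'$; then the least-PL-area normal representative $T''$ automatically avoids $K$ (it cannot lie inside the handlebody $K\cup X$, and it cannot afford to cross $X$). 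Two: even once $T''$ and $\tau T''$ are minimal, Freedman--Hass--Scott only gives that they are disjoint or equal, and the disjoint-but-parallel case must be excluded. The paper does this by observing that the $T^2\times I$ region $Y$ between them is unique (as $M$ is noncompact), hence $\tau$-invariant, and then uses the classification of involutions of $T^2\times I$ exchanging the boundary components to produce an invariant torus projecting to a one-sided nonorientable $2$-suborbifold of $\oo_0$, which does not exist. Your sketch omits this case entirely. So the reduction-to-the-orbifold framing is right, but the two technical ingredients --- the weight-subdivision trick to keep the minimal representative off $K$, and the $T^2\times I$-involution argument to rule out the disjoint case --- are missing, and they are precisely what makes the proof work.
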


\begin{proof}
Arguing by contradiction, we assume that for some $n$ there exists an embedded torus $T'\subset M\setminus K$ which is isotopic to $T_n$. Our aim is to show that after isotopy, $T'$ can be chosen to be invariant under $\tau$. Then by~\cite[Proposition~4.5]{bz:link}  $T'$ and $T_n$ are equivariantly isotopic. This implies that the image of $T'$ is an embedded pillow in $\oo\setminus K'$ isotopic to $P_n$, which contradicts the fact that $P_n$ is trapped by $K'$.

Let $F$ be a parallel copy of $\bord K$ lying outside $K$, and $X$ be the (compact) product submanifold whose boundary is the disjoint union of $\bord K$ and $F$. Thus $X\cap K=\bord K$. Without loss of generality we may assume that $T'\cap X=\emptyset$. We are going to use the Jaco-Rubinstein theory of PL minimal surfaces, as modified in~\cite{maillot:center}. Choose a triangulation $\calT$ of $M$ which is in general position with respect to $T'$ and invariant under $\tau$. Let $w_0$ be the weight of $T'$ with respect to $\calT$. By repeatedly subdividing $\calT$ on $X$, with may assume that any normal surface which meets both components of $\bord X$ has weight strictly greater than $w_0$, keeping the invariance under $\tau$. Then choose a $\tau$-invariant regular Jaco-Rubinstein metric in the sense of~\cite{maillot:center} so that the PL area is well-defined.

Let $T''$ be a normal torus of least PL area in the isotopy class of $T'$. We claim that $T''$ misses $K$. Indeed, $K$, being the double branched cover of a 3-ball with branching locus a trivial tangle, is a handlebody. The same is true for $K\cup X$. Since $T''$ is an incompressible torus, it cannot be contained in $K\subset X$. Hence if it met $K$, it would meet both boundary components of $X$, contradicting the fact that its weight is at most $w_0$.

Since the Jaco-Rubinstein metric we use is $\tau$-invariant, $\tau T''$ also has least PL area in its isotopy class. Now $T''$ is isotopic to $T_n$, which is $\tau$-invariant, so $\tau T''$ is isotopic to $T''$.
By Jaco-Rubinstein's version~\cite{jr:min} of a Theorem of Freedman-Hass-Scott, it follows that $T''$ and $\tau T''$ are disjoint or equal, i.e. $T''$ is invariant or equivariant. We still have to rule out the latter possibility.

Suppose that $T''\cap \tau T''=\emptyset$. Since $T''$ and $\tau T''$ are isotopic, they are parallel. Let $Y$ be the compact submanifold bounded by $T''\cup \tau T''$ and diffeomorphic to $T^2\times I$. Note that $Y$ is unique because $M$ is noncompact. Hence $Y$ is $\tau$-invariant. By the classification of involutions of $T^2\times I$, noting that $\tau$ exchanges the two boundary components, this implies that $\Int X$ contains an invariant embedded torus whose projection in $\oo$ is a one-sided, nonorientable $2$-suborbifold. Since $\oo$ does not contain any such suborbifold, we have reached the desired contradiction. Hence the proof of Lemma~\ref{lem:trapped} is complete.
\end{proof}

\bibliographystyle{alpha}
\bibliography{ex}

\begin{thebibliography}{BMP03}

\bibitem[BMP03]{bmp}
Michel Boileau, Sylvain Maillot, and Joan Porti.
\newblock {\em Three-dimensional orbifolds and their geometric structures},
  volume~15 of {\em Panoramas et Synth{\`e}ses}.
\newblock Soci{\'e}t{\'e} Math\'ematique de France, Paris, 2003.

\bibitem[BZ89]{bz:link}
Michel Boileau and Bruno Zimmermann.
\newblock The {$\pi$}-orbifold group of a link.
\newblock {\em Math. Z.}, 200(2):187--208, 1989.

\bibitem[Joh79]{joh:hom}
Klaus Johannson.
\newblock {\em Homotopy equivalences of 3-manifolds with boundary}, volume 761
  of {\em Lecture Notes in Mathematics}.
\newblock Springer, Berlin, 1979.

\bibitem[JR88]{jr:min}
William Jaco and J.~Hyam Rubinstein.
\newblock P{L} minimal surfaces in 3-manifolds.
\newblock {\em J. Differential Geom.}, 27(3):493--524, 1988.

\bibitem[JS79]{js:seifert}
William~H. Jaco and Peter~B. Shalen.
\newblock {\em Seifert fibered spaces in 3-manifolds}.
\newblock Number 220 in Memoirs of the American Ma\-the\-ma\-ti\-cal Society.
  American Ma\-the\-ma\-ti\-cal Society, Providence, Rhode Island, USA,
  September 1979.

\bibitem[Mai03]{maillot:center}
Sylvain Maillot.
\newblock Open 3-manifolds whose fundamental groups have infinite center, and a
  torus theorem for 3-orbifolds.
\newblock {\em Trans. Amer. Math. Soc.}, 355(11):4595--4638 (electronic), 2003.

\bibitem[Mai08]{maillot:examples}
Sylvain Maillot.
\newblock Some open 3-manifolds and 3-orbifolds without locally finite
  canonical decompositions.
\newblock {\em Algebr. Geom. Topol.}, 8(3):1795--1810, 2008.

\bibitem[Mai12]{maillot:homeosurf}
Sylvain Maillot.
\newblock An algorithmic classification of open surfaces.
\newblock arXiv:1209.2818, September 2012.

\bibitem[McM62]{mcmillan}
D.~R. McMillan, Jr.
\newblock Some contractible open {$3$}-manifolds.
\newblock {\em Trans. Amer. Math. Soc.}, 102:373--382, 1962.

\bibitem[NS97]{ns:canonical}
Walter~D. Neumann and Gadde~A. Swarup.
\newblock Canonical decompositions of {$3$}-manifolds.
\newblock {\em Geom. Topol.}, 1:21--40 (electronic), 1997.

\bibitem[Sch53]{schubert:vollringe}
Horst Schubert.
\newblock Knoten und {V}ollringe.
\newblock {\em Acta Math.}, 90:131--286, 1953.

\bibitem[Sco77]{scott:noncompact}
Peter Scott.
\newblock Fundamental groups of non-compact {$3$}-manifolds.
\newblock {\em Proc. London Math. Soc. (3)}, 34(2):303--326, 1977.

\bibitem[Whi35]{whitehead:unity}
J.~H.~C. Whitehead.
\newblock A certain open manifold whose group is unity.
\newblock {\em Quart. J. Math. Oxford}, 6:268--279, 1935.

\end{thebibliography}

\noindent Institut Montpelli\'erain Alexander Grothendieck\\ CNRS, Universit\'e de Montpellier\\ 
\texttt{sylvain.maillot@umontpellier.fr}

\end{document}